\documentclass{amsart}

\usepackage{dsfont}       
\usepackage[inline,shortlabels]{enumitem}
\usepackage{csquotes}
\usepackage{amssymb}
\usepackage{thmtools}
\usepackage{acro}
\usepackage{aligned-overset}
\usepackage{mathtools}
\PassOptionsToPackage{hyphens}{url}
\usepackage{hyperref}

\theoremstyle{plain}
\newtheorem{theorem}{Theorem}[section]
\newtheorem{lemma}[theorem]{Lemma}

\theoremstyle{definition}
\newtheorem{definition}[theorem]{Definition}

\theoremstyle{remark}
\newtheorem{remark}[theorem]{Remark}

\numberwithin{equation}{section}

\newcommand{\raisedtarget}[1]{%
  \raisebox{\fontcharht\font`P}[0pt][0pt]{\hypertarget{#1}{}}%
}

\setlist[1]{labelindent=\parindent} 
\setlist[itemize]{leftmargin=*}
\setlist[enumerate,1]{
  label = \alph*),
  ref = \alph*)}

\DeclareAcronym{CDF}{
    short = c.d.f\acdot,
    long = cumulative distribution function
}

\DeclareAcronym{QF}{
    short = q.f\acdot,
    long = quantile function
}

\DeclareAcronym{ae}{
    short = a.e\acdot,
    long = almost everywhere,
    first-style = short
}

\DeclareAcronym{wrt}{
    short = w.r.t\acdot,
    long = with respect to,
    first-style = short
}

\DeclareAcronym{wlog}{
    short = w.l.o.g\acdot,
    long = without loss of generality,
    first-style = short
}

\DeclareAcronym{ie}{
    short = i.e\acdot,
    long = that is,
    first-style = short
}

\DeclareAcronym{iid}{
    short = i.i.d\acdot,
    long = independent and identically distributed,
    first-style = short
}

\DeclarePairedDelimiter{\parenthesis}\lparen\rparen

\newcommand{\NamedFunction}[2]{
    \NewDocumentCommand{#1}{sogd()}{
    \operatorname{#2}
    \IfNoValueTF{##3}           
        {\IfNoValueF{##4}
            {
                \IfBooleanTF{##1}
                {\parenthesis*}%
                {\parenthesis[##2]}
                {##4}%
            }%
        }%
        {
            \IfBooleanTF{##1}%
            {\parenthesis*}%
            {\parenthesis[##2]}{##3}%
            \IfNoValueF{##4}{(##4)}
        }%
    }%
}

\NewDocumentCommand{\DefineDelimiterBefore}{mm}{
  \NewDocumentCommand{#1}{somm}
    {##3\IfBooleanTF{##1}{#2*}{#2[##2]}{##4}}  

}

\newcommand{\NN}{\mathbb{N}}

\newcommand{\RR}{\mathbb{R}}

\newcommand{\BC}{\mathcal{B}}

\DeclareMathOperator{\LeftRightarrow}{\Leftrightarrow}          

\providecommand\given{}
\newcommand\SetSymbol[1][]{%
  \nonscript\:#1\vert
  \allowbreak
  \nonscript\:
  \mathopen{}}

\DeclarePairedDelimiterX\set[1]\{\}{%
\renewcommand\given{\SetSymbol[\delimsize]}
#1
}

\NamedFunction{\setcount}{\#}

\newcommand{\defeq}{\coloneqq}
\newcommand{\eqdef}{\eqqcolon}

\newcommand{\R}{\RR}                         
\newcommand{\Rbar}{\overline{\R}}                               
\newcommand{\N}{\NN}                         

\DeclareMathOperator{\Id}{id}
\newcommand{\id}{\Id}

\DefineDelimiterBefore{\Functionimage}{\parenthesis}
\NewDocumentCommand{\preimage}{sogg}{\IfBooleanTF{#1}{\Functionimage*}{\Functionimage[#2]}{{#3}^{-1}}{#4}}

\DeclarePairedDelimiterX{\restrBase}[1]{.}{\rvert}{
  \kern-\nulldelimiterspace %
  #1 
  \littletaller 
  }
  
\newcommand{\littletaller}{\mathchoice{\vphantom{\big|}}{}{}{}}

\NewDocumentCommand{\restr}{somt_m}{
    \IfBooleanTF{#1}
                {\restrBase[#2]}%
                {\restrBase*}
    {#3}_{#5}
}

\newcommand{\convexindex}{w}

\newcommand{\Leftgeneralizedinverse}[1]{#1^-}   
\newcommand{\Rightgeneralizedinverse}[1]{#1^+}   

\NewDocumentCommand{\interior}{som}{\IfBooleanTF{#1}{\parenthesis*}{\parenthesis[#2]}{#3}^\circ}
\newcommand{\closure}[1]{\overline{#1}}

\newcommand{\RadonNikodym}[2]{\frac{\diff#1}{\diff#2}}

\newcommand*\diff{\mathop{}\!\mathrm{d}}

\NamedFunction{\B}{\BC}                        

\newcommand{\Lebesgue}{\lambda}

\NamedFunction{\support}{spt}

\newcommand{\pushforward}[2]{#1_\##2}
\newcommand{\ind}{\mathds{1}}               
\newcommand{\1}{\ind}

\newcommand{\uniformmeasure}{\restr{\Lebesgue}{(0,1)}}
\NamedFunction{\betad}{Beta}
\newcommand{\Beta}{\betad}

\newcommand{\setletter}{A}

\begin{document}
\title{Quantile characterization of univariate unimodality}
\author{Markus Zobel}
\address{Institute for Mathematical Stochastics, University of Göttingen}
\email{markus.zobel@uni-goettingen.de}
\author{Axel Munk}
\address{Institute for Mathematical Stochastics, University of Göttingen}
\email{munk@math.uni-goettingen.de}

\subjclass[2020]{Primary 60E05; Secondary 26A48, 26A46}
\date{11.02.2026}
\keywords{Unimodal distribution, quantile function, quantile density, generalized inverses, monotone functions, absolute continuity, quasi-convexity}
\begin{abstract}
    Unimodal univariate distributions can be characterized as piecewise convex-concave \aclp*{CDF}.
    In this note we transfer this shape constraint characterization to the \acl*{QF}.
    We show that this characterization comes with the upside that the \acl*{QF} of a unimodal distribution is always absolutely continuous and consequently unimodality is equivalent to the quasi-convexity of its Radon-Nikodym derivative, \ac{ie}, the quantile density. Our analysis is based on the theory of generalized inverses of non-decreasing functions and relies on a version of the inverse function rule for non-decreasing functions.
\end{abstract}
\maketitle
\section{Introduction}
\label{sec: Introduction}
\subsection{Characterization of unimodality}
We will start with the intuitive characterization of a univariate unimodal (probability) distribution through its (Lebesgue-) density (if it exists): The density is first non-decreasing until a mode and then non-increasing (left of \autoref{fig: unimodal versions}).
This description matches directly with the density being quasi-concave.
\begin{definition}
    \begin{enumerate}[wide, labelwidth=!, labelindent=0pt,itemindent=!]
        \item A function $f:I\to\Rbar$ defined on an open subinterval $I\subseteq\Rbar$ is \emph{quasi-concave} if for all $x,y\in I$ and $\convexindex\in(0,1)$ it holds
              \begin{equation}
                  f(\convexindex x+ (1-\convexindex) y)\geq\min(f(x),f(y)).
                  \label{def: quasi-concaity}
              \end{equation}
        \item A function $f:I\to\Rbar$ is \emph{quasi-convex} if its negative $-f$ is quasi-concave, \ac{ie}, if it satisfies for all $x,y\in I$ and $\convexindex\in(0,1)$ the inequality
              \begin{equation*}
                  f(\convexindex x+ (1-\convexindex) y)\leq\max(f(x),f(y)).
              \end{equation*}
    \end{enumerate}
\end{definition}
Indeed, this definition is equivalent to the qualitative description above:
A function $f:I\to\Rbar$ is quasi-concave (-convex) if and only if there exists a disjoint decomposition of $I$ into two intervals $A,B\subseteq I=A\sqcup B$, where for all $x\in A,y\in B$ we have $x<y$, such that $f$ is non-decreasing (non-increasing) on $A$ and non-increasing (non-decreasing) on $B$ \cite[Corollary 20]{martosNonlinearProgrammingTheory1975}.
These intervals can be open, closed or half-open and one of $A,B$ can even be empty.
Each such a decomposition defines a \emph{mode} of a quasi-concave (-convex) function as $\nu\defeq\sup(A)=\inf(B),\nu\in\Rbar$.\footnote{In the case where one of $A,B$ is empty the definition based on the other can be taken.}
Intuitively, $\nu$ is the point at which the monotonicity switches and a continuous quasi-concave $f$ is maximized at $\nu$.
In the case where a quasi-concave $f$ has multiple local maxima the decomposition of the domain is not unique and all potential modes are collected as the \emph{modal interval} $[\nu_{\min},\nu_{\max}]$ which is always closed.
\begin{definition}
    \label{def:density unimodal}
    An absolutely continuous distribution $\mu:\B(\R)\to[0,1]$ is called \emph{dens-unimodal} if it has a quasi-concave density $f_{\mu}$ (\ac{wrt} to the Lebesgue measure $\Lebesgue$ on $(\R,\B(\R))$). Any mode of $f_\mu$ is also called a \emph{mode} of $\mu$.
\end{definition}
This definition can be generalized beyond the absolute continuous case based solely on properties of the \ac{CDF} $F_\mu:\R\to[0,1]$ of $\mu$ defined as $F_\mu(x)\defeq\mu(-\infty,x]$ for all $x\in\R$.
\begin{definition}[{\cite[Definition 1.1.]{Dharmadhikari.1988}}]
    \label{def: Unimodality CDF}
    A univariate distribution $\mu$ is \emph{\ac{CDF}-unimodal} if there exists a mode $\nu\in\R$ such that the \ac{CDF} $F_\mu$ is convex on $(-\infty,\nu]$ and concave on $[\nu,+\infty)$.
\end{definition}
\begin{figure}
    \centering
    \includegraphics{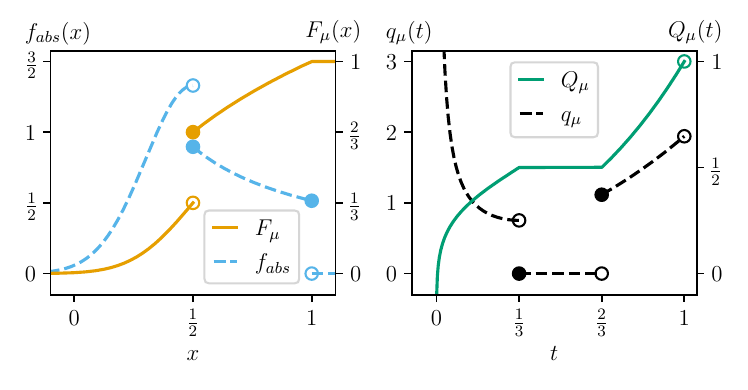}
    \caption{The distribution $\mu$ is \ac{CDF}-unimodal: The density $f_{abs}$ (left, dashed) of $\mu_{abs}$, the absolute continuous part of $\mu$, is quasi-concave (non-decreasing on $A=(-\infty,\frac12)$ and non-increasing on $B=[\frac12,+\infty)$), hence, $\mu_{abs}$ is dens-unimodal. The \ac{CDF} $F_\mu$ (left, filled) is convex on $(-\infty,\frac12]$ and concave on $[\frac12,+\infty)$. The quantile density $q_\mu$ (right, dashed) is quasi-convex with modal interval $[\alpha_{\min},\alpha_{\max}]=[\frac{1}{3},\frac{2}{3}]$ (non-increasing on $(0,\alpha)$ and non-decreasing on $[\alpha,1)$ for all $\alpha\in[\alpha_{\min},\alpha_{\max}]$). The \acs{QF} $Q_\mu$ (right, filled) is concave on $(0,\alpha]$ and convex on $[\alpha,1)$, for $\alpha\in[\alpha_{\min},\alpha_{\max}]$. Empty circles denote left or right limits, while the filled circles show the values attained by the respective functions at their discontinuities. The choices for the densities were made arbitrary.}
    \label{fig: unimodal versions}
\end{figure}
\begin{remark}\begin{enumerate}[wide, labelwidth=!, labelindent=0pt,itemindent=!]
        \item \autoref{def: Unimodality CDF} is a generalization of \autoref{def:density unimodal}: Any \ac{CDF}-unimodal distribution $\mu$ is a mixture of an absolutely continuous dens-unimodal distribution and a Dirac measure at a mode of the latter (see, \autoref{lem: Relation dens unimodal and cdf unimodal}). Every mode of $\mu$ \ac{wrt} \ac{CDF}-unimodality is also a mode \ac{wrt} to dens-unimodality and vice versa.
        \item What we term \ac{CDF}-unimodality corresponds to unimodality in \cite{Dharmadhikari.1988}.
              The set of all \ac{CDF}-unimodal distribution is closed \ac{wrt} the topology of weak convergence of distributions \cite[Theorem 1.1.]{Dharmadhikari.1988} and there is a Choquet-type representation for all \ac{CDF}-unimodal distributions with mode $0$ \cite[Theorem 1.2.]{Dharmadhikari.1988}.
    \end{enumerate}
\end{remark}
\subsection{Main results}
In this paper we provide a new equivalent notion of \ac{CDF}-unimodality.
The result came about as a useful framing while studying properties of Wasserstein-2 barycenters, but it stands on it own in its simplicity.
The starting point for the characterizations of \ac{CDF}-unimodality we propose is the fact that the \ac{QF} $Q_\mu:(0,1)\to\R$, defined as
\begin{equation}
    \label{eqn: Definition quantile function}
    Q_{\mu}(t)\defeq\inf(\set{x\in\R\given t\leq F_\mu(x)}),\quad\forall t\in(0,1),
\end{equation}
of a \ac{CDF}-unimodal distribution enjoys higher regularity.
\begin{theorem}
    \label{thm:unimodaldistributionhasabsolutelycontinuousqunatilefunction}
    The \ac{QF} $Q_{\mu}$ of a \ac{CDF}-unimodal distribution is absolutely continuous.
\end{theorem}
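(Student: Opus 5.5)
The plan is to show that $Q_\mu$ is concave on $(0,\alpha]$ and convex on $[\alpha,1)$ with $\alpha\defeq F_\mu(\nu)$ or a nearby value, and that it is continuous on $(0,1)$. A function that is concave (resp.\ convex) on an open interval is locally Lipschitz there. Local Lipschitz continuity on each piece, together with continuity at the gluing point $\alpha$, gives local absolute continuity on $(0,1)$. This is the sense of absolute continuity for a function on the open interval $(0,1)$: absolute continuity on every compact subinterval.

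First I treat the left piece, where $F_\mu$ is convex on $(-\infty,\nu]$. A convex non-decreasing $F_\mu$ is continuous on $(-\infty,\nu)$. Moreover $F_\mu$ has no atoms in $(-\infty,\nu)$, and on $(\nu,\infty)$ concavity likewise excludes atoms. So the only possible atom of $\mu$ is at $\nu$, giving a jump of $F_\mu$ from $F_\mu(\nu-)$ to $F_\mu(\nu)$.

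Let $a\defeq\inf\set{x\given F_\mu(x)>0}$. On $(a,\nu)$ the function $F_\mu$ is convex and non-decreasing. It is in fact strictly increasing there, because a convex function that is constant on an interval and then increases must be constant on everything to its left, which would force that value to be $0$ and contradict the choice of $a$. Hence on $(0,F_\mu(\nu-))$ the quantile function $Q_\mu$ is the genuine inverse of the continuous, strictly increasing, convex map $\restr{F_\mu}_{(a,\nu)}$. The inverse of an increasing convex function is concave, by reflecting the epigraph across the diagonal. The symmetric argument on $(\nu,b)$ shows that $Q_\mu$ is convex on $(F_\mu(\nu),1)$.

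On the middle interval $[F_\mu(\nu-),F_\mu(\nu)]$, corresponding to the atom, $Q_\mu\equiv\nu$ by definition of the generalized inverse. It remains to check continuity of $Q_\mu$ at the endpoints $F_\mu(\nu-)$ and $F_\mu(\nu)$, i.e.\ that $Q_\mu(t)\to\nu$ as $t\uparrow F_\mu(\nu-)$ and as $t\downarrow F_\mu(\nu)$. A jump of $Q_\mu$ corresponds exactly to a flat piece of $F_\mu$. The strict-increase argument above excludes flat pieces inside $(a,b)$ on both sides of $\nu$, so $Q_\mu$ has no jumps and is continuous on $(0,1)$. The resulting $Q_\mu$ is concave, then constant, then convex on $(0,1)$, hence concave on $(0,\alpha]$ and convex on $[\alpha,1)$ for any $\alpha$ in the modal interval. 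It is therefore locally Lipschitz on the open pieces and continuous at the junction points, which yields absolute continuity on compacts.

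The main obstacle is the careful bookkeeping with generalized inverses near the junction and near the support edges. This includes degenerate cases: $\nu$ at the boundary of the support, the atom carrying mass $0$, or $a=-\infty$. It also includes justifying that a concave/convex function continuous at the junction is absolutely continuous across it. For the latter I will simply note that $Q_\mu$ is then Lipschitz on each compact $[s,t]\subset(0,1)$ piecewise, with finitely many pieces glued continuously, and absolute continuity is preserved under such gluing.
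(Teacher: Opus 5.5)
Your argument is correct, but it takes a different route from the paper. The paper never inverts the convex and concave branches of $F_\mu$. It uses Lemma~\ref{lem: Relation dens unimodal and cdf unimodal} to write $\mu=\mu_{abs}+c\,\delta_\nu$ with a quasi-concave density $f_{abs}$. The positivity set of a quasi-concave function is an interval whose closure is $\operatorname{spt}(\mu)=S_{F_\mu}$, so $f_{abs}>0$ $\Lebesgue_{F_\mu}$-a.e. The paper then applies the general criterion of Theorem~\ref{thm: absolute continuity of the generalized inverse}: a generalized inverse is absolutely continuous iff $g_{abs}>0$ a.e.\ on the mass interval, equivalently $\Lebesgue_G\ll\mu_G$.

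Your strict-increase argument, which rules out flat pieces of $F_\mu$ inside $(a,b)$, is the elementary counterpart of that positivity statement. Your convexity/Lipschitz estimate replaces the $\epsilon$--$\delta$ translation between $\mu_G$ and $\Lebesgue$ used in the proof of that theorem.

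Your route has several advantages:
\begin{itemize}
\item it is self-contained and avoids Lebesgue decompositions and Radon--Nikodym calculus;
\item it directly gives the stronger fact that $Q_\mu$ is Lipschitz on every compact subinterval of $(0,1)$, so a locally bounded quantile density exists;
\item it simultaneously proves the forward implication of part~b) of Theorem~\ref{thm: Unimodal distributions via quantile}.
\end{itemize}
It is exactly the ``inverse of a convex function is concave'' approach that the introduction mentions and deliberately sets aside. The paper's route is a few lines once its machinery is in place. It rests on a criterion valid for arbitrary non-decreasing functions, and it is tied to the inverse function rule $f_{abs}=1/(q_\mu\circ F_\mu)$, which the paper needs for the quasi-convex quantile density characterization.

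One step should be stated more carefully. Local Lipschitz continuity on the open pieces plus continuity at the junctions does not by itself give absolute continuity: glue $x\sin(1/x)$ on $(0,1]$ to $0$ on $[-1,0]$. It works here only because $Q_\mu$ is non-decreasing, and you can close the step in either of two ways:
\begin{itemize}
\item continuity at the junctions means the associated measure $\mu_{Q_\mu}$ has no atoms there, so absolute continuity on the open pieces suffices;
\item a non-decreasing concave function on $(0,\alpha]$ has all chord slopes on $[s,\alpha]$ in $[0,\,2(Q_\mu(s)-Q_\mu(s/2))/s]$, and symmetrically for the convex piece on $[\alpha,t]$.
\end{itemize}
The same monotonicity is also what justifies your claim that ``concave, then constant'' is concave on $(0,\alpha]$.
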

A proof is given in \autoref{sec: Proof of absolute continuity o fgeneralized inverse of unimodal genearting}.
For now, we define \emph{absolute continuity of a non-decreasing function}, like the \ac{QF} $Q_\mu:(0,1)\to\R$, as the existence of a \emph{Radon-Nikodym derivative} $q_\mu:(0,1)\to\Rbar_{\geq 0}$ such that $Q_\mu(y)-Q_{\mu}(x)=\int_{x}^yq_{\mu}\diff\Lebesgue$ for all $x,y\in(0,1)$. The Radon-Nikodym derivative $q_{\mu}$ of a \ac{QF} $Q_\mu$ is called the \emph{quantile density} of $\mu$ \cite{parzenNonparametricStatisticalData1979}.
The following result is the main result of this paper and provides a characterization of \ac{CDF}-unimodal distributions $\mu$ based solely on properties of their quantile densities $q_{\mu}$, similar to \autoref{def:density unimodal}.
\begin{theorem}
    \label{thm: Unimodal distributions via quantile}
    A univariate distribution $\mu$ is \ac{CDF}-unimodal if and only if
    \begin{enumerate}[left=\parindent]
        \item
              \label{enum: quantile density definition of unimodality} there exists a quantile density $q_\mu$ of $\mu$ which is quasi-convex, or
        \item
              \label{enum: Qunatife function definition of unimodality} there exists a \emph{quantile mode} $\alpha\in[0,1]$ such that the \ac{QF} $Q_\mu$ is concave on $(0,\alpha]$ and convex on $[\alpha,1)$.
    \end{enumerate}
\end{theorem}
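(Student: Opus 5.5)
We will prove the chain CDF-unimodal $\Rightarrow$ b) $\Rightarrow$ a) $\Rightarrow$ CDF-unimodal. The main tool is that $Q_\mu$ is the left-continuous generalized inverse of the non-decreasing $F_\mu$. We also use the elementary fact that convexity of a non-decreasing function corresponds to concavity of its inverse on the image, so that the convex branch of $F_\mu$ turns into the concave branch of $Q_\mu$.

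\emph{From CDF-unimodality to b).} Let $\nu$ be a mode and set $\alpha\defeq F_\mu(\nu-)$; the precise choice in $[F_\mu(\nu-),F_\mu(\nu)]$ is immaterial, because an atom can only sit at $\nu$. This follows from \autoref{lem: Relation dens unimodal and cdf unimodal}: $\mu$ is a dens-unimodal measure plus a Dirac mass at $\nu$. On $(-\infty,\nu)$ the function $F_\mu$ is convex, non-decreasing and continuous, so $Q_\mu$ restricted to $(0,\alpha]$ is its generalized inverse there.

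Intervals where $F_\mu$ is flat only occur at the left end of the support, since a flat piece of a convex non-decreasing function must lie to the left of its increasing part. Hence $Q_\mu$ is a genuine inverse of a strictly increasing convex function on $(0,\alpha)$, and is therefore concave. Continuity of $Q_\mu$ at $\alpha$ needs the absence of gaps between $\nu$ and the support. For this we invoke \autoref{thm:unimodaldistributionhasabsolutelycontinuousqunatilefunction}: $Q_\mu$ is continuous, so it is concave on $(0,\alpha]$. Symmetrically, $Q_\mu$ is convex on $[\alpha,1)$, with flat pieces of $F_\mu$ now only at the right end.

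An atom at $\nu$ makes $Q_\mu$ constant, equal to $\nu$, on $[F_\mu(\nu-),F_\mu(\nu)]$. This is compatible with both shapes, and it explains the modal interval.

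\emph{From b) to a).} By \autoref{thm:unimodaldistributionhasabsolutelycontinuousqunatilefunction}, or directly since concave and convex functions are locally absolutely continuous and $Q_\mu$ is continuous at $\alpha$, the function $Q_\mu$ is absolutely continuous. Take $q_\mu$ to be its left derivative. This is non-increasing on $(0,\alpha]$ by concavity and non-decreasing on $[\alpha,1)$ by convexity. If needed, redefine $q_\mu$ at the single point $\alpha$ to be the minimum of the one-sided derivatives, so that the piecewise monotonicity in Martos' characterization holds. Thus $q_\mu$ is quasi-convex.

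\emph{From a) to CDF-unimodality.} A quasi-convex $q_\mu$ is non-increasing on $A$ and non-decreasing on $B$, with a split point $\alpha$. Integrating gives that $Q_\mu$ is concave on $(0,\alpha]$ and convex on $[\alpha,1)$, and that $Q_\mu$ is continuous. Put $\nu\defeq Q_\mu(\alpha)$, interpreting $\alpha\in\{0,1\}$ via limits.

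On $(-\infty,\nu)$ we have $F_\mu=\Rightgeneralizedinverse{Q_\mu}$. A continuous, non-decreasing, concave $Q_\mu$ has a generalized inverse that is convex; its constancy intervals correspond exactly to jumps of $F_\mu$, which can only occur at $\nu$. We verify the convexity of $F_\mu$ on $(-\infty,\nu]$ through epigraph/hypograph reflection: the hypograph of $Q_\mu$ reflected across the diagonal is the epigraph of $F_\mu$. Below the support, $F_\mu$ is constant $0$, which preserves convexity. The concave side is symmetric.

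\emph{Main obstacle.} The hardest part is the careful bookkeeping of generalized inverses at the boundary: flat pieces, jumps, the atom at $\nu$, the endpoints $\alpha\in\{0,1\}$, and infinite support endpoints. I would isolate this in a lemma: for a continuous non-decreasing $G$ on an interval, $G$ is concave if and only if $\Rightgeneralizedinverse{G}$ is convex on the relevant range, proved by the epigraph-reflection argument.
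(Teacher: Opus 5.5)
Your proof is correct, but it takes a different route from the paper's. You work directly with $F_\mu$ and $Q_\mu$:
\begin{itemize}
\item the convex and concave branches of $F_\mu$ are inverted into the concave and convex branches of $Q_\mu$;
\item the way back is a reflection of (strict) hypographs and epigraphs, which is legitimate through the Galois relation $x<Q_\mu(t)\Leftrightarrow F_\mu(x)<t$;
\item (a)$\Leftrightarrow$(b) is handled by one-sided derivatives.
\end{itemize}
This is exactly the ``intuitive'' route the paper mentions after the statement and deliberately avoids. The paper proves c.d.f.-unimodality $\Leftrightarrow$ (a) at the level of densities instead. It uses the inverse function rule $f_{abs}=1/(q_\mu\circ F_\mu)$ from \autoref{thm: Radon-Nikodym derivative generlized inverse function} to carry quasi-convexity of $q_\mu$ over to quasi-concavity of $f_{abs}$ along the monotone reparametrisation $F_\mu$. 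It then identifies the zero set of $q_\mu$ (the quantile modal interval) with the atom at $\nu$ and concludes with \autoref{lem: Relation dens unimodal and cdf unimodal}. Only (a)$\Leftrightarrow$(b) goes through \autoref{thm: convexity characterisation}, as in your middle step.

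Your route is more elementary and needs none of the Section~3 machinery. In your first step you do not even need \autoref{thm:unimodaldistributionhasabsolutelycontinuousqunatilefunction}:
\begin{itemize}
\item $Q_\mu$ is left-continuous by definition;
\item a concave non-decreasing $F_\mu$ on $[\nu,\infty)$ can only be flat where it equals $1$, so $Q_\mu$ is right-continuous at $F_\mu(\nu)$.
\end{itemize}
With these, that theorem becomes a corollary of (b). The paper's route buys an explicit density-level dictionary between $f_{abs}$ and $q_\mu$, matching modes of $f_{abs}$ to the minimum and zero set of $q_\mu$.

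Two small repairs are needed.
\begin{itemize}
\item In (b)$\Rightarrow$(a) you may not cite \autoref{thm:unimodaldistributionhasabsolutelycontinuousqunatilefunction}, since c.d.f.-unimodality is not yet available at that point of your cycle. Your ``direct'' alternative works, but you must justify continuity at $\alpha$. Concavity on $(0,\alpha]$ forces $Q_\mu(\alpha)\le Q_\mu(\alpha-)$, convexity on $[\alpha,1)$ forces $Q_\mu(\alpha)\ge Q_\mu(\alpha+)$, and monotonicity gives the reverse inequalities. This is precisely where the closed endpoints in (b) matter; compare the counterexample in the introduction.
\item For $\alpha\in\{0,1\}$, note that $\nu=Q_\mu(\alpha\pm)$ is finite. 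A non-decreasing convex (respectively concave) function on $(0,1)$ is bounded below (respectively above), so the mode lies in $\R$ as the definition requires.
\end{itemize}
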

A proof is given in \autoref{sec: Proof of characterization of Unimodality via quantile functions}.
Intuitively, \ref{enum: Qunatife function definition of unimodality} is equivalent to \autoref{def: Unimodality CDF} as the inverse of a non-decreasing convex function is non-decreasing and concave, and vice versa.
Our proof of \autoref{thm: Unimodal distributions via quantile} will not use this approach directly but instead focuses on the relation of the quantile density $q_\mu$ and the density $f_{abs}$ of the absolutely continuous dens\nobreakdash-unimodal part $\mu_{abs}$ of a general \ac{CDF}-unimodal distribution $\mu$. The two are related by a generalized inverse function rule for Radon-Nikodym derivatives
\begin{equation}
    f_{abs}=\frac{1}{q_{\mu}\circ F_\mu}\quad\text{$\Lebesgue$-\ac{ae} on $\support{\mu}$,}
    \label{eq: initial inverse function rule}
\end{equation}
\ac{ie}, there is a null set $N\in\B(\R)$ such that $\set{x\in\support{\mu}\given f_{abs}(x)\neq\frac{1}{q_{\mu}\circ F_\mu}(x)}\subseteq N$ and $\Lebesgue(N)=0$. Here, $\support{\mu}$ denotes the support of $\mu$, \ac{ie}, the smallest closed subset $A\subseteq\R$ such that $\mu(\R\setminus A)=0$.
\subsection{Novelty}
To the best of our knowledge \autoref{thm:unimodaldistributionhasabsolutelycontinuousqunatilefunction} and \autoref{thm: Unimodal distributions via quantile} are new.
The characterization \ref{enum: Qunatife function definition of unimodality} of \autoref{thm: Unimodal distributions via quantile} appears in \cite[Section 6.1.]{backhoffStochasticGradientDescent2025} for absolutely continuous $\mu$ with a subtle difference. They only require the \ac{QF} to be concave and convex on the open intervals $(0,\alpha),(\alpha,1)$ respectively. This condition is only necessary but not sufficient for \ac{CDF}-unimodality.
These requirements do not guarantee the continuity of the \ac{QF} at the quantile mode. A discontinuity at a quantile mode $\alpha\in(0,1)$ corresponds to a flat region in the \ac{CDF}. We then can decompose the \ac{CDF} into four regions such that the function is  convex, concave, convex, concave. Therefore, the corresponding \ac{CDF} is \ac{CDF}-bimodal. A counter example is the (absolutely continuous) mixture of two uniform distributions. Take $\mu=\restr{\Lebesgue}{(0,\frac12)}+\restr{\Lebesgue}{(\frac32,2)}$ then its \ac{QF} $Q_\mu=\restr{\Id}{(0,1)} +\1_{(\frac12,1)}$ is \emph{affine}, \ac{ie}, convex and concave, on both $(0,\frac12)$ and $(\frac12,1)$. But, the measure $\mu$ is clearly dens-bimodal (non-decreasing, non-increasing, non-decreasing, non-increasing density) and thus not \ac{CDF}-unimodal.

\subsection{Implications}
\label{sec: Implications}
The authors of \cite{backhoffStochasticGradientDescent2025} use their characterization to investigate the (dens-) unimodality of the Wasserstein-2 barycenter of (dens-) unimodal distribution, as the \ac{QF} of the Wasserstein-2 barycenter of a distribution of distributions is the point-wise average of the random \acp{QF}{}.
Therefore, characterizing distributions via their \ac{QF} is pertinent to prove the preservation of these properties when taking the Wasserstein-2 barycenter.
While this idea was also the starting point of this work, we want to emphasize the simplicity of the new characterization of \ac{CDF}-unimodality of distributions via their \ac{QF} in \autoref{thm: Unimodal distributions via quantile}.
It encompasses the standard definition of unimodality \cite{Dharmadhikari.1988} and reveals that the quantile perspective on unimodality can be used equally well as we always have enough regularity for the existence of a quantile density.
\subsection{Organization of the paper}
The groundwork for inferring properties of the \ac{QF} from properties of the \ac{CDF}, and vice versa will be laid in \autoref{sec: Non-Decreasing Functions}. As every \ac{CDF} is non-decreasing and the \ac{QF} is its left-continuous non-decreasing generalized inverse, we will focus on non-decreasing functions, their generalized inverses and associated measures there.
In \autoref{sec: Abosultely continuity of the generalized inverse} we collect and adapt results from the literature to prove the aforementioned inverse function rule \eqref{eq: initial inverse function rule} for non-decreasing real-valued functions and their generalized inverse (\autoref{thm: Radon-Nikodym derivative generlized inverse function}).
Then, in \autoref{sec: Unimodality of locally finite measures} we apply these results to prove \autoref{thm:unimodaldistributionhasabsolutelycontinuousqunatilefunction} and \autoref{thm: Unimodal distributions via quantile}. We conclude by extending our results to locally finite Borel measures.

\section{Non-decreasing functions}
\label{sec: Non-Decreasing Functions}
The theory of non-decreasing functions and its connection to measure theory is standard \cite{natansonTheorieFunktionenReellen1961,hewittRealAbstractAnalysis1969}.
We will start with the general framework of extended real-valued non-decreasing functions.
Their generalized inverses are again non-decreasing functions whose graphs are the graphs of the original functions with the axes flipped (\autoref{fig: generalized inverse}).
We introduce the correspondence between real-valued non-decreasing functions on open intervals and Borel measures there. In this work the \emph{extended real numbers} are $\Rbar\defeq\R\cup\set{-\infty,+\infty}$ with the usual topology and $-\infty< x< +\infty$ for all $x\in\R$. We formally set $\pm\infty + x= \pm\infty$ for all $x\in\R$.
\subsection{Non-decreasing functions and generalized inverses}
It is well-known \cite[Theorem 4.29]{rudinPrinciplesMathematicalAnalysis1976} that any \emph{non-decreasing} function $G:\R\to\Rbar$, \ac{ie}, for all $x\leq y\in\R$ it holds $G(x)\leq G(y)$,
has left- and right limits at any $x\in\R$, denoted as
\begin{equation*}
    \lim_{y\nearrow x}G(y)\eqdef G(x-) ,\quad\text{respectively} \lim_{y\searrow x}G(y)\eqdef G(x+).
\end{equation*}
Furthermore, $G$ is continuous except for countably many jump discontinuities \cite[(8.19)]{hewittRealAbstractAnalysis1969}. The non-decreasingness enables us to obtain for all $x\in\R$\begin{equation*}
    G(x-)=\sup_{y\in(-\infty,x)}G(y)\leq G(x)\leq\inf_{y\in(x,+\infty)}G(y)=G(x+).
\end{equation*} This allows us to define its \emph{left-continuous} or \emph{lower-semicontinuous}  version and \emph{right-continuous} or \emph{upper-semicontinuous} version
\begin{equation*}
    \begin{aligned}
        G_l(x) & \defeq G(x-)=\sup_{y\in(-\infty,x)}G(y),
        \\
        G_r(x) & \defeq G(x+)=\inf_{y\in(x,+\infty)}G(y),
    \end{aligned}
    \quad\forall x\in\R.
\end{equation*}
We directly have that $G_l\leq G\leq G_r$. Any other non-decreasing function $F$ satisfying $G_l\leq F\leq G_r$ is thus $\Lebesgue$-\ac{ae} equal to $G$ and may only differ from it at the shared discontinuities and $F_l=G_l,F_r=G_r$. We call any such non-decreasing function $F$ a \emph{version} of $G$. A non-decreasing function $G:\R\to\Rbar$ together with all its versions ensembles to a $\Lebesgue$-\ac{ae} equivalence class restricted to non-decreasing functions denoted as $[G]$.
\begin{remark}
    The \ac{CDF} $F_\mu:\R\to[0,1]$ is an example of a non-decreasing real-valued function. By the notation from above $F_\mu=(F_\mu)_r\in[F_\mu]$ is the right-continuous version of its equivalence class of non-decreasing functions.
\end{remark}
For a non-decreasing function $G:\R\to\Rbar$ a \emph{generalized inverse} $H:\R\to\Rbar$ is a member of a class $[H]$ of non-decreasing functions specified by either its left- or right-continuous version defined as
\begin{equation}
    \label{eqn: definition generalized inverse}
    \begin{aligned}
        H_l(t) & \defeq \Leftgeneralizedinverse{G}(t)\defeq\inf(\set{x\in\R\given t\leq G(x)}),  \\
        H_r(t) & \defeq \Rightgeneralizedinverse{G}(t)\defeq\sup(\set{x\in\R\given t\geq G(x)}),
    \end{aligned}
    \quad\forall t\in\R,
\end{equation}
\ac{ie}, $H_l\leq H\leq H_r$ and $H$ is non-decreasing \cite{fengNoteGeneralizedInverses2012,embrechtsNoteGeneralizedInverses,fortelleGeneralizedInversesIncreasing}. It is well known that $[H]$ is well-defined \cite[Section 3]{delafortelleStudyGeneralizedInverses2015}, \ac{ie}, $\Leftgeneralizedinverse{G}$ is indeed non-decreasing and left-continuous, $\Rightgeneralizedinverse{G}$ is non-decreasing and right-continuous, and they agree $\Lebesgue$-\ac{ae}.
Therefore, all generalized inverses of $G$ are versions of each other and make up the equivalence class $[H]$. Furthermore, we have that
\begin{equation*}
    \inf(\set{x\in\R\given t\leq G(x)})=\min(\set{x\in\R\given t\leq G_r(x)}),\quad\forall t\in\R
\end{equation*} by the upper-semicontinuity of $G_r$ \cite[Lemma 2.1]{kampkeIncomeModelingBalancing2015}\footnote{We need to formally set $\min({\emptyset})=+\infty$.} and thus the definition of $[H]$ is independent of the version of $[G]$ used. Additionally, $\Rightgeneralizedinverse{H_r}=G_r$ \cite[Proposition 4.2]{delafortelleStudyGeneralizedInverses2015}, and consequently we can relate $[G]$ and $[H]$ by one being the inverse class of the other, \ac{ie}, to be a generalized inverse is a class property (\autoref{fig: generalized inverse}).
At the core of the relation between the two classes is a fundamental equivalence \cite[Lemma 1 (h)(3),(j)(3)]{wackerPleaseNotAnother2023}. It is given as either
\begin{subequations}
    \label{eqn: Galois connection}
    \begin{align}
        G_l(x) > t                  & \LeftRightarrow x > H_r(t),\quad\forall x,t\in\R,\label{eqn: Galois connection greater}      \\
        \text{or}\quad G_l(x)\leq t & \LeftRightarrow x\leq H_r(t),\quad\forall x,t\in\R.\label{eqn: Galois connection less equal}
    \end{align}
\end{subequations}
From now on $G$ and $H$ will always be a pair of generalized inverses, \ac{ie},  generalized inverses of one another, and we will use arbitrary versions instead of emphasizing the underling equivalence sets. In the case where $G$ is bijective we have $[H]=\set{G^{-1}}$ in all other cases $G^{-1}$ denotes the set-valued preimage of $G$.

\begin{figure}
    \centering
    \includegraphics{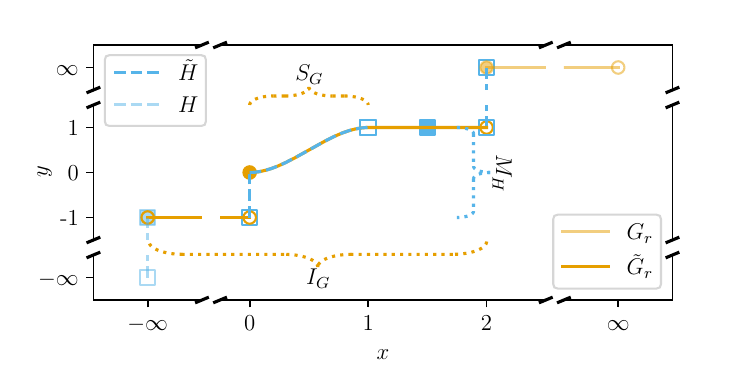}
    \caption{The right-continuous version $G_r$ (orange, filled, transparent) of a non-decreasing function with its real-valued restriction $\Tilde{G}_r:I_G\to\R$ (orange, filled) are plotted as functions mapping $x$ values to  $y$ values while a generalized inverse $H=\frac{H_l+H_r}{2}$ (blue, dashed, transparent) of $G$ and its real restriction $\Tilde{H}:I_H\to\R$ (blue, dashed) are plotted as functions mapping $y$ values to $x$ values. The left and right limits of $G_r$, $H$ are shown as empty circles, respectively empty squares. The filled variants represent the value attained by the function at a discontinuity. The braces visualize the intervals: $S_G=\closure{\preimage{G_r}{M_H}}$, $M_H=\interior{\preimage{H}{I_G}}$, $I_G=\interior{\preimage{G_r}{\R}}$. We have that $\mu_G=\delta_0+\Beta(2,2)$ restricted to $(-\infty,2)=I_G$.}
    \label{fig: generalized inverse}
\end{figure}
\subsection{Non-decreasing real-valued functions}
The class of non-decreasing functions includes non-decreasing real-valued functions $\Tilde{G}:I\to\R$ only defined on an open subinterval $I\subseteq\R$. We embed $\Tilde{G}$ as $G:\R\to\Rbar$ by extending it with $-\infty$ to the left of $I$ and by $+\infty$ to the right of $I$, precisely $G(x)\defeq-\infty$ for all $x\leq\inf(I)$ and $G(x)\defeq+\infty$ for all $x\geq\sup(I)$ and $G(x)=\Tilde{G}$ else.
Conversely, every class of non-decreasing extended real-valued functions $[G]$ can be reduced to a class of real-valued non-decreasing functions on an open subinterval $I_G\subseteq\R$ denoted as the \emph{regular domain},\footnote{At this point we exclude some peculiar cases: \begin{enumerate*}
        \item The constant $-\infty$ and constant $+\infty$ function which would both be represented by the empty function $G:\emptyset\to\R$.
        \item The classes of non-decreasing functions with a jump from $-\infty$ to $+\infty$ at some point $x\in\R$
              (again $I_G$ would be the empty set as $H\equiv x$).
    \end{enumerate*} Combined the excluded functions correspond to all possible constant generalized inverses. Consequently, we also exclude everywhere constant functions $G$.} where
\begin{equation*}
    I_G\defeq(H(-\infty),H(+\infty))\defeq (H(-\infty+),H(+\infty-))\defeq\big(\inf_{y\in\R} H(y),\sup_{y\in\R} H(y)\big).
\end{equation*}
Then $\Tilde{G}\defeq\restr{G}{I_G}:I_G\to\R$ the restriction of $G$ to $I_G$ is a real-valued function, as for every $x\in I_G$ there exists $u,s\in\R$ such that $H(u)<x<H(s)$ and therefore
\begin{equation*}
    -\infty<u\overset{\eqref{eqn: Galois connection less equal}}{\leq} G_r(H(u))\leq G(x)=\Tilde{G}(x)\leq G_l(H(s))\overset{\eqref{eqn: Galois connection less equal}}{\leq} s<+\infty.
\end{equation*}
This hints at the alternative representation $I_G=\interior{\preimage{G}{\R}}$ (\autoref{fig: generalized inverse}). Of which only the inclusion $I_G\supset\interior{\preimage{G}{\R}}$ remains to be shown.
For each $x\in\interior{\preimage{G}{\R}}$ there exist $u<x<s\in\interior{\preimage{G}{\R}}$ such that
\begin{equation*}
    H(-\infty)\leq H_l(G(u))\overset{\eqref{eqn: Galois connection less equal}}{\leq} u<x<s\overset{\eqref{eqn: Galois connection less equal}}{\leq} H_r(G(s))<H(+\infty).
\end{equation*}
\begin{remark}
    The \ac{QF} $Q_\mu:(0,1)\to\R$ is a generalized inverse of $F_\mu$ restricted to its regular domain $I_{Q_\mu}=(F_\mu(-\infty),F_\mu(+\infty))=(0,1)$. By comparing \eqref{eqn: Definition quantile function} and \eqref{eqn: definition generalized inverse} we conclude that $Q_\mu=\big(\Tilde{Q}_\mu\big)_l:(0,1)\to\R$.
\end{remark}
In the following $\Tilde{G}$ will always denote this restriction of a non-decreasing extended real-valued function $G:\R\to\Rbar$ to it regular domain $I_G$.
\subsection{Non-decreasing functions and locally-finite borel measures}
To every non-decreasing extended real-valued function $G:\R\to\Rbar$ we associate a Borel measure $\mu_G$ on $I_G$, \ac{ie}, $\mu_G:\B(I_G)\to\Rbar_{\geq0}$, by fixing the value on open intervals $(x,y)\subseteq I_G$ as
\begin{equation}
    \mu_G((x,y))\defeq G(y-)-G(x+)=G_l(y)-G_r(x).
    \label{eq: measure from monotone function}
\end{equation}
This definition is invariant under constant shifts, \ac{ie}, $\mu_G=\mu_{G+c}$ for each $c\in\R$.
We have that $\mu_G$ is \emph{locally finite}, \ac{ie}, for all compact sets $C\subset I_G$ it holds $\mu_G(C)<+\infty$, as for each compact set $C\subset I_G$ there exists $[x,y]\subset I_G$ with $C\subseteq[x,y]$ and consequently
\begin{equation*}
    \mu_G(C)\leq\mu_G([x,y])={G}_r(x)-{G}_l(y)=\Tilde{G}_r(x)-\Tilde{G}_l(y)<+\infty.
\end{equation*}

Conversely, we can define for a locally finite Borel measure $\mu:\B(I)\to\Rbar_{\geq0}$ on an open interval $I$ a class of non-decreasing functions $[G_\mu]$ by defining a right-continuous version of $\Tilde{G}_\mu:I\to\R$ via
\begin{equation}
    \Tilde{G}_\mu\big(x)=\begin{cases}
        \mu((z,x])  & x>z     \\
        -\mu((x,z]) & x\leq z
    \end{cases},\quad\forall x\in I
    \label{eq: monotone function from measure}
\end{equation}
based on a choice of $z\in I=I_{G_\mu}$ \cite[(19.45)]{hewittRealAbstractAnalysis1969}.
\begin{remark}
    This definition differs from the classical definition of the \ac{CDF}. In the latter one chooses $z=-\infty\in\Rbar=\closure{I_{F_\mu}}$, so the closure of the regular domain of $F_\mu$, instead of its interior as in \eqref{eq: monotone function from measure}. For general locally finite Borel measures $\mu:I\to\Rbar_{\geq0}$ this is not possible as $G_\mu((\inf I_{G_{\mu}})-)=-\infty$ may hold for any distribution function $G_\mu$ of $\mu$.
\end{remark}
Combining \eqref{eq: measure from monotone function} and \eqref{eq: monotone function from measure} gives us a bijection between locally-finite Borel measures  $\mu:\B(\R)\to\Rbar_{\geq0}$ on $\R$ and classes of non-decreasing real-valued functions $[G]$  which satisfy $G_r(0)=0$ with $z=0\in\R$ \cite[(19.48)]{hewittRealAbstractAnalysis1969}. Enriching these classes by adding constant shifts consequently relates all non-decreasing real-valued functions with locally finite Borel measures. This equivalence extends directly to locally finite Borel measures on arbitrary open subsets $I\subseteq\R$ and non-decreasing real-valued functions on $I$. Here, we have to associate each possible $I$ with a fixed choice of $z_I\in I$. Consequently, we can call any non-decreasing function $G:I_G\to\Rbar$ a \emph{distribution function} of $\mu_G$ its \emph{associated measure}. The associated measure $\mu_H:\B(I_H)\to\Rbar_{\geq0}$ of a generalized inverse $H$ of $G$ is called the \emph{inverse measure} of $\mu_G$ \cite[Appendix A]{bobkovOnedimensionalEmpiricalMeasures2019}.
\begin{remark}
    It would be natural to identify all measures $\mu:\B(I)\to\Rbar_{\geq0}$ with one another if when extended by $0$ to $\B(\R)$ they are the same measure. The corresponding distribution functions are very similar: They all are restrictions of the distribution function $G:\R\to\Rbar$ which corresponds to the maximally extended measure $\mu:\B(\R)\to\Rbar_{\geq0}$. This works well for finite measures, but in the case where $\mu(I)=+\infty$ any extension of $\mu$ would no longer be locally finite and thus outside the scope of our analysis. Even in the finite case we run into problems as the inverse measures $\mu_H$ corresponding to generalized inverses of differently restricted distribution functions differ.
\end{remark}
\subsection{Pushforward measures under the generalized inverse}
By the definition of $\mu_G$ we have that $\mu_G(I_G)=\sup_{x\in I_G}G(x)-\inf_{x\in I_G}G(x)=\Lebesgue(M_H)$, where we similarly to the regular domain define the \emph{mass interval} $M_H$ as
\begin{equation*}
    M_H\defeq\big(\inf_{y\in I_G}G(y),\sup_{y\in I_G}G(y)\big)\subseteq I_H,
\end{equation*}
which is consequently again given as $M_H=\interior{\preimage{H}{I_G}}$ (\autoref{fig: generalized inverse}).
The restriction of the Lebesgue measure to $M_H$ denoted as $\restr{\Lebesgue}{M_H}$ contains the same mass as $\mu_G$.
As we eventually want to compare $\restr{\Lebesgue}{M_H}$ with $\mu_H$, we define the extension of $\restr{\Lebesgue}{M_H}$ to $\B(I_H)\supset\B(M_H)$. We denote the extension as $\Lebesgue_H:\B(I_H)\to\Rbar_{\geq0}$, and it satisfies $\Lebesgue_H(I_H\setminus M_H)\defeq0$, $\Lebesgue_H(B)\defeq\restr{\Lebesgue}{M_H}(B)$ for all $B\in\B(M_H)$. This definition connects the generalized inverse $H$ with the measure $\mu_G$ directly.
\begin{lemma}
    \label{lem:pushforward of uniform with generalized inverse}
    For a generalized inverse pair $G,H$ holds both
    \begin{equation}
        \label{eqn: Qunatile generates measure}
        \pushforward{\Tilde{H}}{{\Lebesgue}_{H}}=\mu_G\quad\text{and}\quad\pushforward{\Tilde{G}}{\Lebesgue_G}=\mu_H.
    \end{equation}
\end{lemma}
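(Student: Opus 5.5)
By the symmetry of the generalized inverse relation (to be a generalized inverse is a class property, $[G]$ and $[H]$ being inverse classes of one another), it suffices to prove the first identity $\pushforward{\Tilde{H}}{\Lebesgue_H}=\mu_G$. The second follows by exchanging the roles of $G$ and $H$. Both sides are Borel measures on $I_G$. Before comparing them, I check that the left-hand side is well defined. $\Lebesgue_H$ is concentrated on $M_H$, and $\Tilde{H}$ maps $M_H$ into $I_G$ because $M_H=\interior{\preimage{H}{I_G}}$. Moreover, a different version of $H$ changes $\Tilde H$ only on a countable, hence $\Lebesgue$-null, set, so the pushforward does not depend on the chosen version. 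I may therefore work with the right-continuous version $H_r$.

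Both measures are locally finite on $I_G$. For $\mu_G$ this was shown above. For the pushforward it follows from the interval computation below. Hence, by the uniqueness part of the measure–distribution function correspondence \cite[(19.48)]{hewittRealAbstractAnalysis1969}, or equivalently by a $\pi$-$\lambda$ argument, it is enough to check agreement on half-open intervals $(x,y]$ with $x<y$ in $I_G$. For $\mu_G$ we have $\mu_G((x,y])=G_r(y)-G_r(x)$, which follows from \eqref{eq: measure from monotone function} by taking limits.

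The key step is the computation of $\preimage{\Tilde{H}}{(x,y]}$. By the Galois connection \eqref{eqn: Galois connection greater}, applied with the roles of the inverse pair swapped so that it reads $H_l(t)>x\LeftRightarrow t>G_r(x)$, together with its counterpart for $\leq$, one gets up to the endpoints
\begin{equation*}
\set{t\in M_H\given x<H(t)\leq y}=(G_r(x),G_r(y)]\cap M_H .
\end{equation*}
The endpoint ambiguity between the versions $H_l$ and $H_r$ involves at most two points, which are $\Lebesgue$-null. Since $x,y\in I_G$, the values $G_r(x),G_r(y)$ lie in $\closure{M_H}$, and so $\Lebesgue_H\big((G_r(x),G_r(y)]\big)=G_r(y)-G_r(x)$. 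This matches $\mu_G((x,y])$, and the two measures agree.

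I expect the main obstacle to be the careful application of \eqref{eqn: Galois connection} when the roles of $G$ and $H$ are interchanged and only left or right versions appear. One has to make sure that the inequalities $x<H(t)$ and $H(t)\leq y$ translate exactly into $t>G_r(x)$ and $t\leq G_r(y)$ up to a null set. A second point needing care is the boundary behaviour at $\inf M_H$ and $\sup M_H$, where $G_r$ may attain the endpoint values. I would handle it by noting that $\Lebesgue_H$ gives zero mass to $I_H\setminus M_H$, which includes the endpoints, so these boundary points do not affect the computation.
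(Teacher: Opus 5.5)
Your proposal is correct and is essentially the paper's proof: the paper also uses the Galois connection \eqref{eqn: Galois connection} to identify $\{t\in M_H : z<\Tilde{H}_l(t)\le x\}$ with $(G_r(z),G_r(x)]\cap M_H$. From this it concludes that the distribution function of $\pushforward{\Tilde{H}}{\Lebesgue_H}$ is $G_r-G_r(z)$, hence that this measure is $\mu_G$, and it gets the second identity by symmetry. Testing on half-open intervals $(x,y]$ with a uniqueness argument is the same computation in different packaging. Your explicit remark on version-independence only makes precise what the paper leaves implicit; note that the Galois step is exact for $H_l$, so it is more natural to work with $H_l$ than with $H_r$.
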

Here $\pushforward{\Tilde{H}}{\Lebesgue_H}$ denotes the pushforward measure of $\Lebesgue_H$ by $\Tilde{H}$ which is defined as $\pushforward{\Tilde{H}}{\Lebesgue_H}:\B(I_G)\to\Rbar_{\geq0}$ with
\begin{equation*}
    \pushforward{\Tilde{H}}{\Lebesgue_H}(A)
    \defeq\Lebesgue_H(\preimage{\Tilde{H}}{A})
    =\Lebesgue_H(\set{t\in I_H\given \Tilde{H}(t)\in A}),\quad\forall A\in\B(I_G).
\end{equation*}
Further $\Lebesgue_G:\B(I_G)\to\Rbar_{\geq0}$ is analogously the extension of $\restr{\Lebesgue}{M_G}$.
\begin{proof}
    To prove that $\mu_G=\pushforward{\Tilde{H}}{\Lebesgue_H}$ we show that the distribution function $G_r$ of $\mu_G$ and $G_{\pushforward{\Tilde{H}}{\Lebesgue_H}}$ defined by \eqref{eq: monotone function from measure} for an arbitrary $z\in I_G$ are shifted versions of one another, \ac{ie}, $G_{\pushforward{\Tilde{H}}{\Lebesgue_H}}=G_r+c(z)$, with the constant $c(z)$ depending only on the choice of $z$. If the difference of two distribution functions is \ac{ae} constant then the associated measures coincide. Therefore, if that were the case, we would conclude that
    \begin{equation*}
        \mu_{G_r}=\mu_{G_r+c(z)}=\mu_{G_{\pushforward{\Tilde{H}}{\Lebesgue_H}}}=\pushforward{\Tilde{H}}{\Lebesgue_H}.
    \end{equation*}
    Thus, fix $z\in I_G$ and take an arbitrary $x\in I_G$. If $x>z$ it holds
    \begin{align*}
        G_{\pushforward{\Tilde{H}}{\Lebesgue_H}}(x)
        \overset{\eqref{eq: monotone function from measure}} & = \Lebesgue(\set{t\in I_H\given \Tilde{H}(t)\in(z,x]})
        = \Lebesgue(\set{t\in \preimage{\Tilde{H}}{I_G}\given z<\Tilde{H}_l(t)\leq x})  +0                                                                                                                                \\
                                                             & {=} \Lebesgue(\set{t\in M_H\given G_r(z)\overset{\eqref{eqn: Galois connection greater}}<t\overset{\eqref{eqn: Galois connection less equal}}\leq G_r(x)})
        =G_r(x)-G_r(z).
    \end{align*}
    Similarly, $G_{\pushforward{\Tilde{H}}{\Lebesgue_H}}(x)=G_r(x)-G_r(z)$ for $z\geq x\in I_G$. Thus, we have found $c(z)=-G_r(z)$ as $G_{\pushforward{\Tilde{H}}{\Lebesgue_H}}=G_r-G_r(z)$ and therefore $\pushforward{\Tilde{H}}{{\Lebesgue}_{H}}=\mu_G$. As $G$ is a generalized inverse of $H$ also $\pushforward{\Tilde{G}}{\Lebesgue_G}=\mu_H$ follows.
\end{proof}

While \eqref{eqn: Qunatile generates measure} always holds, this is not the case in the reverse direction: If we try to push forward $\mu_H$ to $\Lebesgue_G$ by $H$ we need further regularity of $H$, namely that $\Tilde{H}$ is continuous.
\begin{lemma}
    \label{lem: continuity of generalized inverse}
    For a generalized inverse pair $G,H$ the following are
    equivalent
    \begin{enumerate}[left=\parindent]
        \item\label{enum: Tilde H continuous}
              $\Tilde{H}:I_H\to\R$ is continuous.
        \item \label{enum: H surjective}
              $H$ is surjective onto $M_G$.
        \item \label{enum: G injective}
              $G$ is injective on $M_G$.
        \item \label{enum: G strictly increasing}
              $G$ is strictly increasing on $M_G$.
    \end{enumerate}
    In each case $\restr{H\circ G}{M_G}=\id_{M_G}$ and $M_G=\interior{\Functionimage{H}{I_H}}$.
\end{lemma}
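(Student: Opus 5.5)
I will prove the cycle \ref{enum: Tilde H continuous}$\Rightarrow$\ref{enum: H surjective}$\Rightarrow$\ref{enum: G injective}$\Rightarrow$\ref{enum: G strictly increasing}$\Rightarrow$\ref{enum: Tilde H continuous}, working throughout with the Galois connection \eqref{eqn: Galois connection} and the facts $M_G=(\inf_{I_H}H,\sup_{I_H}H)=\interior{\preimage{G}{I_H}}$ and $I_H=\interior{\preimage{H}{\R}}$ established above. Since $G$ is a generalized inverse of $H$, the roles are symmetric, and I will freely pass between versions, since a version changes values only at jumps.

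For \ref{enum: Tilde H continuous}$\Rightarrow$\ref{enum: H surjective}, I use that $\Tilde{H}$ is a continuous non-decreasing function on the open interval $I_H$. By the intermediate value theorem its image is an interval with endpoints $\inf_{I_H}H$ and $\sup_{I_H}H$, so the image contains $M_G$. For \ref{enum: H surjective}$\Rightarrow$\ref{enum: G injective}, fix $x<y$ in $M_G$ and pick $s,t$ with $H(s)=x$ and $H(t)=y$. Necessarily $s<t$, and moreover $s\le G_l(y)$ would follow from \eqref{eqn: Galois connection}. The key point is that $G$ constant on $[x,y]$ with value $c$ forces $H$ to jump at $c$ over $(x,y)$, i.e.\ $H_l(c)\le x<y\le H_r(c)$, since $G_r(x)\ge c$ gives $H_l(c)\le x$ and $G_l(y)\le c$ gives $y\le H_r(c)$ by \eqref{eqn: Galois connection less equal}. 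A jump of $H$ leaves the interior of $(x,y)$ out of the image of any version of $H$, which contradicts surjectivity. Thus $G(x)<G(y)$. The step \ref{enum: G injective}$\Rightarrow$\ref{enum: G strictly increasing} is immediate, since a non-decreasing injective function is strictly increasing.

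For \ref{enum: G strictly increasing}$\Rightarrow$\ref{enum: Tilde H continuous}, I argue by contraposition. Suppose $\Tilde{H}$ jumps at some $c\in I_H$, so $H_l(c)<H_r(c)$. Both values lie in $\closure{M_G}$, and the open gap $(H_l(c),H_r(c))$ is nonempty and contained in $M_G$. For $x$ in the gap, \eqref{eqn: Galois connection less equal} gives $G_l(x)\le c$ because $x\le H_r(c)$. The contrapositive of the left analogue of \eqref{eqn: Galois connection greater} gives $G_r(x)\ge c$ because $x>H_l(c)$. Passing to limits across the gap, every version of $G$ is then identically $c$ on the gap, so $G$ is not strictly increasing. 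The main obstacle is exactly this bookkeeping of left and right versions at the gap. I would first derive the mirrored Galois relation $G_r(x)\ge t\Leftrightarrow x\ge H_l(t)$ from \eqref{eqn: Galois connection} by taking limits, and then apply the two relations to conclude.

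Finally, assume the equivalent conditions. For $x\in M_G$, strict monotonicity gives $\{y:G(x)\le G(y)\}=[x,\infty)$ up to the endpoint, so $H_l(G(x))=x$. Symmetrically $H_r(G(x))=x$, hence every version satisfies $H(G(x))=x$, i.e.\ $\restr{H\circ G}{M_G}=\id_{M_G}$. Since $\Functionimage{H}{I_H}$ is then an interval containing $M_G$ and contained in $\closure{M_G}$, taking interiors yields $M_G=\interior{\Functionimage{H}{I_H}}$.
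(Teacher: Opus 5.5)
Your proof is correct, but it takes a different and more self-contained route than the paper. The paper argues as follows.
\begin{itemize}
\item For a)$\Rightarrow$b),c) it cites Wacker's lemma that continuity of $\Tilde H$ gives $H\circ G=\id$ on $G^{-1}(I_H)$. Injectivity of $G$ and surjectivity of $H$ on $M_G$ follow, and the identity $M_G=\interior{\Functionimage{H}{I_H}}$ comes from continuity.
\item The step c)$\Rightarrow$d) is immediate.
\item For d)$\Rightarrow$a) it shows that every level set $G^{-1}(\{t\})$, $t\in I_H$, has empty interior, hence has at most one point. It then cites Klement et al.\ for $H_l(t)=H_r(t)$.
\end{itemize}

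Your d)$\Rightarrow$a) uses the same underlying fact: a jump of $H$ at $c$ is a nondegenerate level set $G^{-1}(\{c\})\supseteq(H_l(c),H_r(c))$. You prove it from the two Galois relations instead of citing it. Likewise, you get $H\circ G=\id$ on $M_G$ directly from strict monotonicity rather than from Wacker's lemma.

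The more substantive gain is logical. In the paper, b) is only shown to follow from a); no implication starting from b) is given. So the equivalence of b) with the other conditions is not actually established there. Your cycle a)$\Rightarrow$b)$\Rightarrow$c)$\Rightarrow$d)$\Rightarrow$a) closes this through the jump--gap argument in b)$\Rightarrow$c).

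One small correction in that step. A jump $H_l(c)\le x<y\le H_r(c)$ does not keep all of $(x,y)$ out of the image of a version of $H$, because $H(c)$ itself may be any point of $[H_l(c),H_r(c)]$. What is true is $H(\R)\cap(x,y)\subseteq\{H(c)\}$. This still contradicts surjectivity onto $M_G\supseteq(x,y)$, since $(x,y)$ has more than one point. Also, the points $s,t$ with $H(s)=x$, $H(t)=y$ that you pick at the start of that step are never used and can be dropped.
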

\begin{proof}\leavevmode
    \begin{itemize}[wide, labelwidth=!, labelindent=0pt,itemindent=!]
        \item[\enquote{\ref{enum: Tilde H continuous}$\Rightarrow$\ref{enum: H surjective},\ref{enum: G injective}}:]
              It holds that $\Tilde{H}$ being continuous implies $\restr{H\circ G}{\preimage{G}{I_H}}=\id_{\preimage{G}{I_H}}$ by \cite[Lemma 1.(l)]{wackerPleaseNotAnother2023} and thus $G$ is injective on $M_G=\interior{\preimage{G}{I_H}}$ and $H$ is surjective onto $M_G$, as there exists a left- respectively right inverse. Further as $\Tilde{H}$ is continuous it maps intervals to intervals and thus
              \begin{equation*}
                  M_G=\big(\inf_{t\in I_H}H(t),\sup_{t\in I_H}H(t)\big)=\interior{\Functionimage{\Tilde{H}}{I_H}}=\interior{\Functionimage{H}{I_H}}.
              \end{equation*}
        \item[\enquote{\ref{enum: G injective}$\Rightarrow$\ref{enum: G strictly increasing}}:]
              If $G$ is injective on $M_G$, then it is nowhere constant so strictly increasing.
        \item[\enquote{\ref{enum: G strictly increasing}$\Rightarrow$\ref{enum: Tilde H continuous}}:]
              We claim that, if $G$ is strictly increasing on $M_G$, then for all $t\in I_H$ holds $\interior{\preimage{G}{\set{t}}}=\emptyset$. Otherwise, there would be $x<y\in\interior{\preimage{G}{\set{t}}}$ with $t=G(x)=G(y)$ so $\set{x,y}\not\subset M_G$ but $\set{x,y}\subset\interior{\preimage{G}{\set{t}}}\subset\interior{\preimage{G}{I_H}}=M_G$.
              Therefore, we have for all $t\in I_H$ that $\setcount{\preimage{G}{\set{t}}}\leq 1$ which gives by \cite[(9)]{klementQuasiPseudoinversesMonotone1999} $H_l(t)=H_r(t)$ so $\Tilde{H}$ is continuous.\qedhere
    \end{itemize}
\end{proof}
\begin{lemma}
    \label{cor: pushforward with continuous cdf}
    For a generalized inverse pair $G,H$ holds if $\Tilde{H}$ is continuous then
    \begin{equation}
        \label{eqn: continuous cdf generates uniform}
        \pushforward{\Tilde{H}}{\mu_H}={\Lebesgue}_G.
    \end{equation}
\end{lemma}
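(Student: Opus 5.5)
We want $\pushforward{\Tilde{H}}{\mu_H}=\Lebesgue_G$ as measures on $\B(I_G)$. By the bijection between locally finite measures and distribution-function classes, it suffices to compare the measures on intervals $(a,b]$ with $a<b$ in $I_G$. This is the strategy of \autoref{lem:pushforward of uniform with generalized inverse}, with the roles of $G$ and $H$ and of the measures swapped. We then extend by a $\pi$-$\lambda$ argument; alternatively, we show that the distribution function of $\pushforward{\Tilde{H}}{\mu_H}$ defined by \eqref{eq: monotone function from measure} differs from the identity restricted to $M_G$ (clamped outside $M_G$) by a constant.

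The key step is to compute $\preimage{\Tilde{H}}{(a,b]}$. Since $\Tilde{H}$ is continuous and non-decreasing, \autoref{lem: continuity of generalized inverse} gives that $H$ is surjective onto $M_G$, that $H\circ G=\id$ on $M_G$, and that $M_G=\interior{\Functionimage{H}{I_H}}$. For $a,b\in M_G$ the preimage $\set{t\in I_H\given a<\Tilde{H}(t)\leq b}$ is an interval. Its endpoints are identified through the Galois connection \eqref{eqn: Galois connection}, now applied with $H$ as the function and $G$ as its inverse: $H_l(t)>a\Leftrightarrow t>G_r(a)$ and $H_l(t)\le b\Leftrightarrow t\le G_r(b)$. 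By continuity, $H=H_l=H_r$ on $I_H$. Up to endpoints, which do not matter after we check atoms below, the preimage is therefore $(G_r(a),G_r(b)]\cap I_H$. Hence
\[
\pushforward{\Tilde{H}}{\mu_H}((a,b])=\mu_H((G_r(a),G_r(b)])=H_r(G_r(b))-H_r(G_r(a)).
\]
For this value I would invoke $H\circ G=\id$ on $M_G$, taking care to use the right version of $G$. Because $G$ is strictly increasing on $M_G$, we have $H_r(G_r(x))=x$ for $x\in M_G$; the Galois relation gives $x\le H_r(G_l(x))\le H_r(G_r(x))$, and strict monotonicity forbids strict inequality. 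So the mass of $(a,b]$ is $b-a=\Lebesgue_G((a,b])$.

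Points of $I_G\setminus M_G$ are handled separately. If $a<\inf M_G$, then $\Tilde{H}(t)\geq\inf H(I_H)=\inf M_G$ for every $t$, so the preimage of $(a,\inf M_G)$ is empty. Likewise nothing is mapped above $\sup M_G$. This matches $\Lebesgue_G(I_G\setminus M_G)=0$, apart from possible mass at the endpoints of $M_G$ when these lie in $I_G$. Such mass would come from intervals of $I_H$ on which $\Tilde{H}$ is constant and equal to $\inf M_G$. Those intervals are of the form $(\inf I_H,G(\inf M_G)]$, and I would argue they carry no $\mu_H$-mass because $G$ equals $-\infty$-like values there, i.e.\ they lie outside the range of $G$ inside $I_H$. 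This endpoint and atom bookkeeping is the step I expect to be the main obstacle: one must check via \eqref{eq: measure from monotone function} that $\mu_H$ assigns the right mass to the half-open preimages and that no point mass of $\mu_H$ at a jump of $G$ creates an atom of the pushforward. Continuity of $\Tilde{H}$, i.e.\ $G$ having no flat pieces, is exactly what makes the half-open endpoints harmless. Finally, agreement on the generating $\pi$-system of half-open intervals, together with local finiteness, yields the equality of measures.
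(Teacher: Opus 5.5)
Your argument is correct, but it takes a longer route than the paper.

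\textbf{The paper's route.} The paper never computes a preimage. It uses \autoref{lem:pushforward of uniform with generalized inverse} in the form $\mu_H=\pushforward{\Tilde{G}}{\Lebesgue_G}$, so that $\pushforward{\Tilde{H}}{\mu_H}=\pushforward{(\Tilde{H}\circ\Tilde{G})}{\Lebesgue_G}$. Since $\Lebesgue_G$ is concentrated on $M_G$, and $H\circ G=\id$ there by \autoref{lem: continuity of generalized inverse}, the result is $\Lebesgue_G$.

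\textbf{Your route.} You redo, with $\mu_H$ in place of $\Lebesgue_H$, the Galois-connection interval computation that proves \autoref{lem:pushforward of uniform with generalized inverse}. The preimage of $(a,b]$ is exactly $(G_r(a),G_r(b)]\cap I_H$; the Galois connection gives this on the nose, so no ``up to endpoints'' is needed. Then $H\circ G_r=\id$ on $M_G$ gives mass $b-a$. This is self-contained and does not use the earlier lemma. The price is the boundary bookkeeping at $\inf M_G$ and $\sup M_G$, which the composition argument handles automatically because everything off $M_G$ is $\Lebesgue_G$-null.

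\textbf{The boundary step.} This step is easier than you fear, and your stated reason for it (``$G$ equals $-\infty$-like values there'') is muddled. Since $\Tilde{H}$ is continuous, $\mu_H$ has no atoms. By \eqref{eq: measure from monotone function}, $\mu_H$ also gives zero mass to any interval on which $H$ is constant, such as $\Tilde{H}^{-1}(\{\inf M_G\})$ or $\Tilde{H}^{-1}(\{\sup M_G\})$. No appeal to the range of $G$ is required. Your worry about ``point masses of $\mu_H$ at a jump of $G$'' is likewise moot. Jumps of $G$ correspond to flat pieces of $H$, not to atoms of $\mu_H$, and $\mu_H$ has no atoms at all here.
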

\begin{proof}
    We use that $\Lebesgue_G(A)=\Lebesgue_G(A\cap M_G)$ for all $A\in\B(I_G)$ and thus
    \begin{equation*}
        \pushforward{\Tilde{H}}{\mu_H}(A)
        \overset{\eqref{eqn: Qunatile generates measure}}{=}\pushforward{(\Tilde{H}\circ\Tilde{G})}{{\Lebesgue}_{G}}(A)
        \overset{\autoref{lem: continuity of generalized inverse}}{=} \pushforward{(\id_{M_G})}{\Lebesgue_G}(A\cap M_G)
        ={\Lebesgue}_G(A).\qedhere
    \end{equation*}
\end{proof}
\begin{remark}
    \autoref{lem:pushforward of uniform with generalized inverse} generalizes that the \ac{QF} $Q_\mu:(0,1)\to\R$ is a random variable with distribution $\mu$ from the probability space $((0,1),\B(0,1),\uniformmeasure)$ to $(\R,\B(\R))$ \cite[Proposition 2.2.]{Santambrogio.2020}. Similarly, \autoref{cor: pushforward with continuous cdf} is known in the distribution case. If the \ac{CDF} $F_\mu:\R\to[0,1]$ of the distribution $\mu:\B(\R)\to[0,1]$ is continuous then it pushes $\mu$ forward to the uniform distribution on $(0,1)=M_{Q_\mu}$ \cite[Chapter 1.5]{Panaretos.2020} as $I_{F_\mu}=\R$ and consequently $I_{Q_\mu}=M_{Q_\mu}$.
\end{remark}
Applying the definition scheme of $I_G,M_G$ again we end up with the \emph{supporting interval} $S_G$ defined slightly altered as
\begin{equation*}
    S_G=\big[\inf_{y\in M_H}H(y),\sup_{y\in M_H}H(y)\big].
\end{equation*}
This time $S_G=\closure{\preimage{G}{M_H}}$ and consequently for all $x\in I_G\setminus S_G$ it holds either $G(x)=\inf M_H=\inf_{y\in I_G}G(y)$ or $G(x)=\sup M_H=\sup_{y\in I_G}G(y)$ (\autoref{fig: generalized inverse}). Therefore, $\Tilde{G}$ is constant outside $S_G$ and consequently $\mu_G(I_G\setminus S_G)=0$ and $S_G$ is the smallest closed interval with this property as for all $x\in\interior{S_G}$ it holds that there are $u<x<s$ such that $G(u)<G(x)<G(s)$. Therefore, $S_G$ is the convex hull of $\support{\mu_G}$, the support of $\mu_G$. If the support is convex, \ac{ie}, itself an interval, we have that $\support{\mu_G}=S_G$.
\section{Absolute continuity of the generalized inverse}
\label{sec: Abosultely continuity of the generalized inverse}
In this section we turn our focus to absolute continuity of measures and their distribution functions. After introducing both we find similar to \autoref{lem: continuity of generalized inverse} conditions when a generalized inverse is absolutely continuous. The Radon-Nikodym derivative fulfills an inverse function rule in that case.

From now on, non-decreasing functions $G:\R\to\Rbar$ are given as $\Tilde{G}:I_G\to\R$ and we will drop the tilde notation.
\subsection{Absolute continuity of measures}
For measures $\mu,\rho:\B(I)\to\Rbar_{\geq0}$ on some open interval $I\subseteq\R$, we call $\mu$ \emph{absolutely continuous \ac{wrt}} $\rho$, notated as $\mu\ll\rho$, if and only if for all $A\in\B(I)$ it holds that if $\rho(A)=0$ then also $\mu(A)=0$. Let $\mu$ and $\rho$ now be locally finite, then $\mu\ll\rho$ holds if and only if there exists a \emph{density} or \emph{Radon-Nikodym derivative} $\RadonNikodym{\mu}{\rho}:I\to\Rbar_{\geq0}$ of $\mu$ \ac{wrt} $\rho$ \cite[(19.24)]{hewittRealAbstractAnalysis1969}, \ac{ie}, \begin{equation*}
    \int_A \RadonNikodym{\mu}{\rho}\diff\rho=\mu(A),\quad\forall A\in\B(I).
\end{equation*}
All Radon-Nikodym derivatives of $\mu$ \ac{wrt} $\rho$ ensemble into an equivalence class of $\rho$-integrable functions, and integrals \ac{wrt} $\diff\mu$ are the same as integrals \ac{wrt} $\RadonNikodym{\mu}{\rho}\diff\rho$, \ac{ie}, a Radon-Nikodym derivative is really a density \cite[3.9 Proposition.a.]{follandRealAnalysisModern1999}.
Recall that $\mu$ has a unique \emph{Lebesgue decomposition} \ac{wrt} $\rho$ \cite[(19.42)]{hewittRealAbstractAnalysis1969} given by the measures $\mu_{abs}:\B(I)\to\Rbar_{\geq0}$ and $\mu_{\perp}:\B(I)\to\Rbar_{\geq0}$, where $\mu_{abs}\ll\rho$ and $\mu_{\perp}$ is \emph{singular \ac{wrt}} $\rho$, notated as $\mu_\perp\perp\rho$, \ac{ie}, there exists a \emph{singularity set} $\setletter_{\mu_{\perp{}}}\in\B(I)$ such that $\mu_{\perp}(I\setminus\setletter_{\mu_{\perp}})=0=\rho(\setletter_{\mu_\perp})$.
Given that singularity set, we also have the equalities $\mu_{abs}(A)=\mu(A\setminus \setletter_{\mu_{\perp{}}})$ and $\mu_{\perp}(A)=\mu(A\cap \setletter_{\mu_{\perp{}}})$ for all $A\in\B(I)$. We now characterize absolute continuity between $\mu$ and $\rho$ by properties of the Lebesgue decomposition of $\mu$ \ac{wrt} to $\rho$.
\begin{lemma}
    \label{Lem: Radon-Nikodym derivatives}
    Let $\mu,\rho:\B(I)\to\Rbar_{\geq0}$ be locally finite Borel measures with the Lebesgue decomposition $\mu=\mu_{abs}+\mu_\perp$ of $\mu$ \ac{wrt} $\rho$. Then
    \begin{enumerate}[left=\parindent]
        \item \label{enum: absolute continuity wrt to Lebesgue decomposition absolute continuity}
              $\mu\ll\rho$ if and only if $\mu_\perp=0$,
        \item \label{enum: absolute continuity wrt to Lebesgue decomposition reverse absolute continuity}
              $\rho\ll\mu$ if and only if
              \begin{enumerate}[i)]
                  \item \label{enum: asbsolute continuous wrt to absolute continuous part}
                        $\rho\ll\mu_{abs}$, or
                  \item \label{enum: absolute continuous density positive}
                        $\RadonNikodym{\mu_{abs}}{\rho}>0$ $\rho$-\ac{ae}.
              \end{enumerate}
    \end{enumerate}
\end{lemma}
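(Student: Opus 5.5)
Part a) follows directly from uniqueness of the Lebesgue decomposition. If $\mu_\perp=0$, then $\mu=\mu_{abs}\ll\rho$. Conversely, suppose $\mu\ll\rho$. Then $\mu_\perp(I)=\mu(\setletter_{\mu_\perp})$, and since $\rho(\setletter_{\mu_\perp})=0$, absolute continuity gives $\mu(\setletter_{\mu_\perp})=0$. Hence $\mu_\perp=0$. Uniqueness of the decomposition is not even needed; the identity $\mu_\perp(A)=\mu(A\cap\setletter_{\mu_\perp})$ suffices.

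For b) we prove the chain $\rho\ll\mu \Leftrightarrow$ i) $\Leftrightarrow$ ii). Fix the singularity set $\setletter\defeq\setletter_{\mu_\perp}$ with $\rho(\setletter)=0$, and write $f\defeq\RadonNikodym{\mu_{abs}}{\rho}$.

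\emph{$\rho\ll\mu\Rightarrow$ i).} Take $A$ with $\mu_{abs}(A)=0$. Then $\mu(A\setminus\setletter)=\mu_{abs}(A)=0$, so $\rho(A\setminus\setletter)=0$ by $\rho\ll\mu$. Adding $\rho(A\cap\setletter)\le\rho(\setletter)=0$ gives $\rho(A)=0$.

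\emph{i) $\Rightarrow\rho\ll\mu$.} This is immediate, since $\mu\ge\mu_{abs}$ means every $\mu$-null set is $\mu_{abs}$-null.

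\emph{i) $\Rightarrow$ ii).} Let $Z\defeq\set{x\in I\given f(x)=0}$. Then $\mu_{abs}(Z)=\int_Z f\diff\rho=0$, so i) yields $\rho(Z)=0$, i.e.\ $f>0$ $\rho$-\ac{ae}.

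\emph{ii) $\Rightarrow$ i).} If $\mu_{abs}(A)=\int_A f\diff\rho=0$, then $f\1_A=0$ $\rho$-\ac{ae}. Since $f>0$ $\rho$-\ac{ae}, this forces $\1_A=0$ $\rho$-\ac{ae}, so $\rho(A)=0$.

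The only step needing care is the standard fact that a nonnegative measurable function with vanishing integral over $A$ is zero $\rho$-\ac{ae} on $A$. We get it by writing $A\cap\set{f>0}=\bigcup_n A\cap\set{f>1/n}$ and applying Markov's inequality to each set. Local finiteness is used only to guarantee that $f$ exists. The step I expect to be the main (mild) obstacle is bookkeeping that ii) is well defined, i.e.\ independent of the chosen version of $f$. This holds because any two versions agree $\rho$-\ac{ae}, so the condition "$f>0$ $\rho$-\ac{ae}" does not depend on the version.
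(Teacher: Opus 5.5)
Your proof is correct. Part a), the equivalence $\rho\ll\mu\Leftrightarrow$ i), and i) $\Rightarrow$ ii) match the paper's argument almost verbatim. For a), the paper appeals to uniqueness of the Lebesgue decomposition, whereas you read $\mu_\perp=0$ directly off $\mu_\perp(A)=\mu(A\cap\setletter_{\mu_\perp})$; both are fine.

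The real difference is in ii) $\Rightarrow$ i). You argue directly: $\int_A f\diff\rho=0$ and $f>0$ $\rho$-a.e. force $\rho(A)=0$. The paper instead chooses a finite version of $f$, notes $\frac1f\cdot f=1$ $\rho$-a.e., and checks $\int_A\frac1f\diff\mu_{abs}=\int_A\frac1f f\diff\rho=\rho(A)$. In other words, it exhibits $\frac1f$ explicitly as $\RadonNikodym{\rho}{\mu_{abs}}$.

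Your route is more elementary and does not need $f<\infty$. The paper takes its route because the proof yields the reciprocal formula $\RadonNikodym{\rho}{\mu_{abs}}=1/\RadonNikodym{\mu_{abs}}{\rho}$ as a byproduct. That formula is reused explicitly ("we conclude from the proof of'' this lemma) in step 1 of the proof of the inverse function rule for generalized inverses in Section 3. If your version of the lemma is to support that later argument, you would need to add this computation separately.
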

\begin{proof}\leavevmode
    \begin{itemize}[wide, labelwidth=!, labelindent=0pt,itemindent=!]
        \item[\ref{enum: absolute continuity wrt to Lebesgue decomposition absolute continuity}]
              As the Lebesgue decomposition is unique, both cases are equivalent to $\mu=\mu_{abs}$.
        \item[\enquote{\ref{enum: absolute continuity wrt to Lebesgue decomposition reverse absolute continuity}$\LeftRightarrow$\ref{enum: asbsolute continuous wrt to absolute continuous part}}:] \leavevmode
              \begin{itemize}[wide = \parindent]
                  \item[\enquote{$\Rightarrow$}:]
                        Assume that $\rho\ll\mu$. Then, if $0=\mu_{abs}(A)=\mu(A\setminus \setletter_{\perp})$, we have by the absolute continuity that also $0=\rho(A\setminus \setletter_{\perp})=\rho(A)$ for all $A\in\B(I)$, with the last equality given by the definition of $\setletter_{\perp}$, ensuring that $\rho(\setletter_{\perp})=0$.
                  \item[\enquote{$\Leftarrow$}:]
                        Assume that $\rho\ll\mu_{abs}$. Now, if $\mu(A)=0$ for some $A\in\B(I)$ then so is $\mu_{abs}(A)$ and therefore $\rho(A)$.
              \end{itemize}
        \item[\enquote{\ref{enum: absolute continuous density positive}$\LeftRightarrow$\ref{enum: asbsolute continuous wrt to absolute continuous part}}:] If we assume that both measures are equivalent, \ac{ie}, that $\mu_{abs}\ll\rho$ and $\rho\ll\mu_{abs}$, then $\mu_{abs}(\set{x\in I\given \RadonNikodym{\mu_{abs}}{\rho}=0})=0$ and thus $\rho(\set{x\in I\given \RadonNikodym{\mu_{abs}}{\rho}=0})=0$, \ac{ie}, $\RadonNikodym{\mu_{abs}}{\rho}>0$ $\rho$-\ac{ae}.
              Therefore, in both cases we have that $\frac{1}{\RadonNikodym{\mu_{abs}}{\rho}}\RadonNikodym{\mu_{abs}}{\rho}=1$ $\rho$-\ac{ae}, as any Radon-Nikodym derivative can be chosen finite.
              By calculating that $\frac{1}{\RadonNikodym{\mu_{abs}}{\rho}}$ is a Radon-Nikodym derivative of $\rho$ \ac{wrt} $\mu_{abs}$ we prove that $\rho\ll\mu_{abs}$
              \begin{equation*}
                  \int_A\frac{1}{\RadonNikodym{\mu_{abs}}{\rho}}\diff\mu_{abs}=\int_A\frac{1}{\RadonNikodym{\mu_{abs}}{\rho}}\RadonNikodym{\mu_{abs}}{\rho}\diff\rho=\rho(A),\quad\forall A\in\B(I).\qedhere
              \end{equation*}
    \end{itemize}
\end{proof}
\subsection{Absolute continuity of non-decreasing real-valued functions}\leavevmode{}
\begin{definition}\label{def:Radon-Nikodym derivative}
    A non-decreasing real-valued function $G:I_G\to\R$ is \emph{absolutely continuous} if and only if $\mu_G\ll\restr\Lebesgue{I_G}$. A function $g:I_G\to\Rbar_{\geq0}$ is the \emph{Radon-Nikodym derivative} of $G$ if $g=\RadonNikodym{\mu_G}{\restr{\Lebesgue}{I_G}}$ $\restr{\Lebesgue}{I_G}$-\ac{ae}. For an open subinterval $I\subset I_G$ we have $G$ is \emph{absolutely continuous on} $I$ if and only if $\restr{G}{I}$ is absolutely continuous.
\end{definition}
\begin{remark}
    This definition is equivalent to the non-decreasing real-valued function being absolute continuous in the analysis sense \cite[7.18]{Rudin.1987b} recalled in the proof of \autoref{thm: absolute continuity of the generalized inverse}. In particular, absolutely continuity implies continuity. The definition is also equivalent to the definition mentioned in the introduction, which in contrast does not reference the associated measure $\mu_G$.
\end{remark}
There is also a Lebesgue decomposition of a non-decreasing function \cite[(19.61)]{hewittRealAbstractAnalysis1969}
\begin{equation*}
    G_r=G_{abs}+G_{\perp}+G_r(z)\defeq G_{(\mu_G)_{abs}}+G_{(\mu_G)_\perp}+G_r(z)
\end{equation*}
corresponding to the Lebesgue decomposition $\mu_G$ \ac{wrt} to $\restr{\Lebesgue}{I_G}$ via the distribution functions defined by \eqref{eq: monotone function from measure} and a fixed choice of $z\in I_G$. It separates $G_r$ into an absolute continuous part $G_{abs}$ and a \emph{singular} part $G_{\perp}$, \ac{ie}, $G_\perp$ is $\Lebesgue$-\ac{ae} differentiable with derivative $0$, and a constant shift based on $z$. Conversely, $\mu_G=\mu_{G_{abs}}+\mu_{G_\perp}$ is always a Lebesgue decomposition of $\mu_G$ \ac{wrt} $\restr{\Lebesgue}{I_G}$.
Actually, the former is also the Lebesgue decomposition of $\mu_G$ \ac{wrt} $\Lebesgue_G$ the Lebesgue measure supported on the mass interval $M_G$. We directly have that $\mu_{G_{abs}}\ll\Lebesgue_G$ as $\mu_{G_{abs}}(I_G\setminus M_G)=0$ and $\mu_{G_\perp}\perp\Lebesgue_G$ as $\Lebesgue_G\ll\restr{\Lebesgue}{I_G}$.\footnote{These calculations also hold for degenerate $M_G=\emptyset$ and $\Lebesgue_G=0:\B(I_G)\to\set{0}$ the zero measure as in that case $\mu_{G_{abs}}=0=\Lebesgue_G$.} Further, by the chain rule for Radon-Nikodym derivatives \cite[3.9 Proposition.b.]{follandRealAnalysisModern1999} we have that
\begin{equation*}
    g_{abs}\defeq\RadonNikodym{\mu_{G_{abs}}}{\restr{\Lebesgue}{I_G}}
    =\RadonNikodym{\mu_{G_{abs}}}{{\Lebesgue}_{G}}\RadonNikodym{{\Lebesgue}_{G}}{\restr{\Lebesgue}{I_G}}
    =\RadonNikodym{\mu_{G_{abs}}}{{\Lebesgue}_{G}}\cdot\ind_{M_G}\quad\text{$\restr{\Lebesgue}{I_G}$-\ac{ae}}
\end{equation*}
where $\ind_{M_G}:I_G\to\set{0,1}$ is the indicator function of $M_G$ and as $\ind_{M_G}=1$ $\Lebesgue_G$-\ac{ae}
\begin{equation}
    \label{eqn: relation of Radon-Nikodym derivatives}
    g_{abs}=\RadonNikodym{\mu_{G_{abs}}}{{\Lebesgue}_{G}}\quad\text{${\Lebesgue}_{G}$-\ac{ae}.}
\end{equation}
\subsection{Absolute continuity of the generalized inverse}
Just like the differentiability of a regular inverse the generalized inverse $H$ is not directly absolutely continuous only because $G$ is.
Instead, the absolute continuity of a generalized inverse $H:I_H\to\R$ depends on the Radon-Nikodym derivative $g_{abs}$ on the mass interval $M_G$.

\begin{theorem}[adapted from {\cite[Proposition A.17]{bobkovOnedimensionalEmpiricalMeasures2019}}]
    \label{thm: absolute continuity of the generalized inverse}
    For a generalized inverse pair $G,H$, an open subinterval $I\subseteq I_H$, the corresponding $M\defeq\interior{\preimage{G}{I}}$ and the restricted Lebesgue measure $\restr{\Lebesgue}{M}:\B(I_G)\to\Rbar_{\geq0}$ with support $\closure{M}$ the following are equivalent:
    \begin{enumerate}[left=\parindent]
        \item $H$ is absolutely continuous on $I$.
              \label{enum: absolute continuous generalized inverse}
        \item A Radon-Nikodym derivative $g_{abs}$ of $G_{abs}$ satisfies $g_{abs}>0$ $\restr{\Lebesgue}{M}$-\ac{ae}.
              \label{enum: positive density}
        \item  It holds $\restr{\Lebesgue}{M}\ll\mu_G$.
              \label{enum: Lebesgue is absolutely continuous}
    \end{enumerate}
\end{theorem}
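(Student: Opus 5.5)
The plan is to prove b)$\Leftrightarrow$c) directly from Lemma \ref{Lem: Radon-Nikodym derivatives}, and then to tie a) to c) by pushing measures forward with Lemma \ref{lem:pushforward of uniform with generalized inverse}.

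\emph{Step 1: b)$\Leftrightarrow$c).} Restrict $\mu_G$ and $\restr{\Lebesgue}{M}$ to $\B(M)$. Since $\restr{\Lebesgue}{M}(I_G\setminus M)=0$, the statement $\restr{\Lebesgue}{M}\ll\mu_G$ is equivalent to $\restr{\Lebesgue}{M}\ll\restr{\mu_G}{M}$. The Lebesgue decomposition of $\restr{\mu_G}{M}$ \ac{wrt} $\restr{\Lebesgue}{M}$ is the restriction of $\mu_G=\mu_{G_{abs}}+\mu_{G_\perp}$, with density $g_{abs}$ on $M$. Lemma \ref{Lem: Radon-Nikodym derivatives}\ref{enum: absolute continuity wrt to Lebesgue decomposition reverse absolute continuity} then gives c)$\Leftrightarrow g_{abs}>0$ $\restr{\Lebesgue}{M}$-\ac{ae}, which is b).

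\emph{Step 2: a)$\Leftrightarrow$c).} Apply Lemma \ref{lem:pushforward of uniform with generalized inverse} with the roles swapped: $\mu_H=\pushforward{G}{\Lebesgue_G}$. For a Borel set $B\subseteq I$ this reads
\[
\mu_H(B)=\Lebesgue(\set{x\in M_G\given G(x)\in B}).
\]
Up to the countably many jump points of $G$ and the flat pieces on which $G$ equals an endpoint of $I$, the set $\set{x\in M_G\given G(x)\in B}$ is $\preimage{G}{B}\cap M$, where $M=\interior{\preimage{G}{I}}$. So
\[
\restr{\mu_H}{I}(B)=\restr{\Lebesgue}{M}(\preimage{G}{B}).
\]
Now combine this with Lemma \ref{lem:pushforward of uniform with generalized inverse} in the other direction, $\mu_G=\pushforward{H}{\Lebesgue_H}$.

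For c)$\Rightarrow$a), take a null set $B\subseteq I$ with $\Lebesgue(B)=0$. Because $H$ is a generalized inverse, the set $A\defeq\preimage{G}{B}\cap M$ satisfies $\mu_G(A)\le\Lebesgue(\set{t\in I\given H(t)\in A})$, and $\set{t\given H(t)\in A}$ is contained in $B$ up to the countable set of jump values of $G$. The precise inclusion should come from the Galois connection \eqref{eqn: Galois connection}: $H(t)\in A$ and $G$ continuous at $H(t)$ force $t=G(H(t))\in B$. Hence $\mu_G(A)=0$, so c) gives $\Lebesgue(A)=0$, and therefore $\mu_H(B)=0$.

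For a)$\Rightarrow$c), take $A\subseteq M$ with $\mu_G(A)=0$. Then $\Lebesgue(\set{t\given H(t)\in A})=0$. Its image under $H$ covers $A$ up to the countably many points where $G$ jumps, which correspond to intervals on which $H$ is constant. Since $H$ is absolutely continuous on $I$, it has Lusin's property (N), so it maps Lebesgue null sets to null sets. Hence $\Lebesgue(A)=0$, which is c).

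\emph{Main obstacle.} The main obstacle is the bookkeeping in Step 2: showing that the exceptional sets are genuinely null or countable. These are the jumps of $G$ (flat pieces of $H$), the flat pieces of $G$ (jumps of $H$), and the boundary effects coming from $M$ versus $M_G$. I would first prove the set identity $\set{t\in I\given H(t)\in A}\,\triangle\,G(A)\subseteq$ a countable set for $A\subseteq M$, using \eqref{eqn: Galois connection}.

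An alternative route is to invoke the analytic characterization: absolute continuity of $H$ is equivalent to continuity plus property (N), as in \cite[Proposition A.17]{bobkovOnedimensionalEmpiricalMeasures2019}. With this, c) first yields continuity of $H$ on $I$ via Lemma \ref{lem: continuity of generalized inverse}, because a flat piece of $G$ inside $M$ would be a set of positive Lebesgue measure with zero $\mu_G$-mass. Then Lemma \ref{cor: pushforward with continuous cdf} gives $\pushforward{H}{\mu_H}=\Lebesgue_G$, and transferring null sets through this identity yields property (N).
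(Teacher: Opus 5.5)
Your Step 1 is the paper's argument for b)$\Leftrightarrow$c). Both apply \autoref{Lem: Radon-Nikodym derivatives} to $\mu_G=\mu_{G_{abs}}+\mu_{G_\perp}$, read as the Lebesgue decomposition relative to Lebesgue measure on $M$; the paper first reduces to $I=I_H$, while you restrict to $M$ directly. Step 2 is a genuinely different route, but the direction c)$\Rightarrow$a) contains a false step. The Galois connection gives $t=G(H(t))$ only when $G$ is continuous at $H(t)$. If $x_0\in A$ is a jump point of $G$, then $H\equiv x_0$ on all of $(G_l(x_0),G_r(x_0))$. Write $J_G$ for the set of jump points of $G$. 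The uncontrolled set $\{t: H(t)\in A\cap J_G\}$ is then a union of nondegenerate intervals of total length $\mu_G(A\cap J_G)$, not a countable set, so ``hence $\mu_G(A)=0$'' is false. For instance, take $\mu_G=\frac12\lambda|_{(0,1)}+\frac12\delta_{1/2}$ and $I=(0,1)$, so that c) holds. Then $B=\{3/4\}$ gives $A=\{1/2\}$, $\{t: H(t)\in A\}=[1/4,3/4]$ and $\mu_G(A)=1/2$. The set identity in your ``main obstacle'' paragraph fails for the same reason. The repair is to discard the atoms on the $x$-side, where they really are countable. Since $\{t: H(t)\in A\setminus J_G\}\subseteq B$, you get $\mu_G(A\setminus J_G)\le\lambda(B)=0$, so c) gives $\lambda(A\setminus J_G)=0$. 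Also $\lambda(A\cap J_G)=0$, hence $\mu_H(B)=\lambda(A)=0$.

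In a)$\Rightarrow$c) your conclusion is right, but the exceptional set is misidentified. Jump points of $G$ do lie in $H(I)$, since they are the values of $H$ on its flat pieces, and $\mu_G(A)=0$ excludes them from $A$ anyway. What could be missing from $H(I)$ are the flat pieces of $G$, i.e. the jumps of $H$, and these are intervals. They are absent because a) makes $H$ continuous on $I$, which gives $M\subseteq H(I)$ and hence $A=H(\{t\in I: H(t)\in A\})$ exactly. You need to say this, since without continuity the covering claim is false. Your alternative sketch also falls short as stated. Combined with continuity of $H$, the identity $H_{\#}\mu_H=\lambda_G$ only turns property (N) into $\mu_H|_I\ll\lambda$, which is a) itself. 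The content of c) beyond continuity still has to enter through $\mu_G=H_{\#}\lambda_H$, with the same removal of $J_G$.

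After these repairs your route is valid and differs from the paper's. The paper first secures continuity of $H$ in both directions. It then translates the $\epsilon$--$\delta$ definition of absolute continuity of $H$ interval by interval: $(a_j,b_j)$ goes to $(x_j,y_j)=(H(a_j),H(b_j))$ and is shrunk to $(G_r(x_j),G_l(y_j))$. Lengths in $t$ thereby become $\mu_G$-masses, and increments of $H$ become Lebesgue lengths. This yields exactly the $\epsilon$--$\delta$ characterization of $\lambda|_{[x,y]}\ll\mu_G$ for finite measures, which is elementary and handles both directions at once. Your version works purely with null sets and reuses the pushforward identities of \autoref{lem:pushforward of uniform with generalized inverse}. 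Its c)$\Rightarrow$a) needs neither continuity of $H$ nor interval bookkeeping, because the paper defines absolute continuity of $H$ on $I$ as $\mu_H|_I\ll\lambda$. Its a)$\Rightarrow$c) outsources the analytic work to Lusin's property (N) of absolutely continuous functions.
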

\begin{proof}
    Assume \ac{wlog} that $I\neq\emptyset$.
    In the case where $\Functionimage{H}{I}$ is a singleton and $M=\emptyset$ all three properties are always true.
    \begin{itemize}[wide, labelwidth=!, labelindent=0pt,itemindent=!]
        \item[{\ref{enum: absolute continuous generalized inverse}}]
              The generalized inverse $H$ is constant on $I$, so absolutely continuous.
        \item[\ref{enum: positive density}]
              As $\restr{\Lebesgue}{M}$  is the zero measure $0:\B(I_G)\to\set{0}$, all properties of $g_{abs}$ hold $\restr{\Lebesgue}{M}$-\ac{ae}.
        \item[\ref{enum: Lebesgue is absolutely continuous}]
              By the same reason, $\restr{\Lebesgue}{M}\ll\mu$ for all Borel measures $\mu:\B(I_G)\to\Rbar_{\geq0}$.

    \end{itemize}
    The non-degenerate case is treated in \cite{bobkovOnedimensionalEmpiricalMeasures2019} where the equivalence of \ref{enum: absolute continuous generalized inverse} and \ref{enum: positive density} is stated as Proposition A.17 in the case where $G$ is a \ac{CDF} of a non-degenerate probability measure and $I=I_H$. We give a slightly adapted proof here for completeness’s sake:

    Assume \ac{wlog} that $I=I_H$, since we can otherwise consider $\widehat{G}$ a generalized inverse of $\restr{H}{I}:I\to\R$ together with the latter.
    Then, we have that $\widehat{G}_r=G_r$ on $M$ and therefore $g_{abs}=\widehat{g}_{abs}$ $\restr{\Lebesgue}{M}$-\ac{ae}.
    Also, $\restr{\mu_G}{M}=\restr{\mu_{\widehat{G}}}{M}$ and consequently $\restr{\Lebesgue}{M}\ll\restr{\mu_{\widehat{G}}}{M}\ll\mu_{\widehat{G}}$ if and only if $\restr{\Lebesgue}{M}\ll\restr{\mu_{G}}{M}\ll\mu_{G}$.
    Thus, we take for granted that $I=I_H$, $H$ is not constant, $M=M_G$ is the mass interval of $G$ and $\restr{\Lebesgue}{M}=\Lebesgue_G$.
    \begin{itemize}[wide, labelwidth=!, labelindent=0pt,itemindent=!]
        \item[\enquote{\ref{enum: Lebesgue is absolutely continuous}$\LeftRightarrow$\ref{enum: positive density}}:]
              Since $\mu_G=\mu_{G_{abs}}+\mu_{G_{\perp}}$ is also the Lebesgue decomposition of $\mu_G$ \ac{wrt} $\Lebesgue_G$, we have by \autoref{Lem: Radon-Nikodym derivatives} that $\Lebesgue_G\ll\mu_G$ if and only if $0<\RadonNikodym{\mu_{G_{abs}}}{\Lebesgue_G}
                  \overset{\eqref{eqn: relation of Radon-Nikodym derivatives}}
                  {=}g_{abs}$ $\Lebesgue_G$-\ac{ae}.
        \item[\enquote{\ref{enum: absolute continuous generalized inverse}$\LeftRightarrow$\ref{enum: Lebesgue is absolutely continuous}}:]
              In both cases $H:I_H\to\R$ is continuous. Under \ref{enum: absolute continuous generalized inverse} this is clear and under \ref{enum: Lebesgue is absolutely continuous} it holds by \ref{enum: positive density} that $G_{abs}$ is strictly increasing on $M_G$ and consequently by \autoref{lem: continuity of generalized inverse} $H$ is continuous. By the continuity of $H$ we further have that $M_G=\interior{\Functionimage{H}{I_H}}$.

              The function $H:I_H\to\R$ is \emph{absolutely continuous in the analysis sense} \cite[7.17]{Rudin.1987b} on $I_H$ if and only if for every $[a,b]\subset I_H$ and $\epsilon>0$ there exists a $\delta>0$ such that for any disjoint sequence of open subintervals $((a_j,b_j))_{j\in N}$ of $[a,b]$ with possible countable index set $N\subseteq\N$ it holds
              \begin{equation*}
                  \sum_{j\in N} b_j-a_j<\delta\Rightarrow\sum_{j\in N} H(b_j)-H(a_j)<\epsilon.
              \end{equation*}
              As $H$ is not constant, we may assume that $H(b_j)\neq H(a_j)$ for all $j\in N$.
              We can replace $a_j$ and $b_j$ by their largest respectively their smallest value such that their value under $H$ does not change. Set $x_j=H(a_j)$ and $y_j=H(b_j)$ for $j\in N$, then the replacements are $G_r(x_j)\geq a_j$ and $G_l(y_j)\leq b_j$. By the continuity of $H$ the closed subsets $[a,b]\subset I_H$ generate all closed subsets $[x,y]=\Functionimage{H}{[a,b]}\subseteq\Functionimage{H}{I_H}$. Therefore, we have that the absolute continuity of $H$ is equivalent to the fact that for every $[x,y]\subseteq\Functionimage{H}{I_H}$ and $\epsilon>0$ there exists a $\delta>0$ such that for every sequence of disjoint subintervals $((x_j,y_j))_{j\in N}$ of $[x,y], N\subseteq\N$ it holds
              \begin{equation*}
                  \delta>\sum_{j\in N} G_l(y_j)-G_r(x_j)=\mu_G\bigg(\bigcup_{j\in N}(x_j,y_j)\bigg)\Rightarrow\epsilon>\sum_{j\in N} y_j-x_j=\Lebesgue\bigg(\bigcup_{j\in N}(x_j,y_j)\bigg).
              \end{equation*}
              This in turn is equivalent to the property that for every $[x,y]\subseteq\Functionimage{H}{I_H}$ and $\epsilon>0$ there exists a $\delta>0$ such that $\mu_G(A)<\delta\Rightarrow\Lebesgue(A)<\epsilon $ for all $A\in\B([x,y])$, by regularity of measures and the fact that $\Lebesgue(\set{x,y})=0$.
              As $\restr{\Lebesgue}{[x,y]}$ is a finite measure, this is the same \cite[3.5 Theorem]{follandRealAnalysisModern1999} as $\restr{\Lebesgue}{[x,y]}\ll\restr{\mu_G}{[x,y]}$ for all subintervals $[x,y]\subseteq \Functionimage{H}{I_H}$ so the same as $\restr{\Lebesgue}{\Functionimage{H}{I_H}}\ll\restr{\mu_G}{\Functionimage{H}{I_H}}$.
              Finally, as $\Functionimage{H}{I_H}\supset{M_G}$ and thus $\Lebesgue_G(I_G\setminus \Functionimage{H}{I_H})\leq\Lebesgue_G(I_G\setminus M_G)=0$, we have that $\restr{\Lebesgue}{\Functionimage{H}{I_H}}\ll\restr{\mu_G}{\Functionimage{H}{I_H}}$ if and only if $\Lebesgue_{G}\ll\mu_G$.\qedhere
    \end{itemize}
\end{proof}

We now set out to show the aforementioned inverse function rule for the Radon-Nikodym derivative.

\begin{restatable}{theorem}{Lem: Radon-Nikodym derivative of generalized inverse}
    \label{thm: Radon-Nikodym derivative generlized inverse function}
    In the setting of \autoref{thm: absolute continuity of the generalized inverse} we have that if $H:I_H\to\R$ is absolutely continuous on an open subinterval $I\subseteq I_H$ with a Radon-Nikodym derivative $h:I\to\Rbar_{\geq0}$, then
    \begin{equation}
        g_{abs}
        =\frac{1}{h\circ G}\quad\text{$\restr{\Lebesgue}{M}$-\ac{ae}.}\label{eqn: density of absolutely continuous part of inverse measure of absolutely continuous measure}
    \end{equation}
\end{restatable}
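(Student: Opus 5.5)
We first reduce to the case $I=I_H$ exactly as in the proof of \autoref{thm: absolute continuity of the generalized inverse}: replacing $G$ by a generalized inverse $\widehat{G}$ of $\restr{H}{I}$ changes neither $G_r$ on $M$ nor $g_{abs}$ $\restr{\Lebesgue}{M}$-\ac{ae}. So we may assume $M=M_G$ and $\restr{\Lebesgue}{M}=\Lebesgue_G$, with $H$ absolutely continuous and non-constant; the degenerate case $M=\emptyset$ is trivial. By \autoref{thm: absolute continuity of the generalized inverse} we then have $g_{abs}>0$ $\Lebesgue_G$-\ac{ae}. By \autoref{lem: continuity of generalized inverse}, $H$ is continuous, $H\circ G=\id$ on $M_G$, and $M_G=\interior{\Functionimage{H}{I_H}}$. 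Also $\Lebesgue_G\ll\mu_G$, and hence, by \autoref{Lem: Radon-Nikodym derivatives}, $\RadonNikodym{\Lebesgue_G}{\mu_{G_{abs}}}=1/g_{abs}$.

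The plan is to identify $\frac{1}{h\circ G}$ with $\RadonNikodym{\Lebesgue_G}{\mu_G}$ by a change of variables, combining the two pushforward identities. For a Borel set $A\subseteq M_G$ we use \autoref{cor: pushforward with continuous cdf}, which gives $\pushforward{H}{\mu_H}=\Lebesgue_G$, together with $\mu_H=h\,\diff\restr{\Lebesgue}{I_H}$. This yields
\begin{equation*}
  \Lebesgue(A)=\mu_H(\preimage{H}{A})=\int_{\preimage{H}{A}} h\diff\Lebesgue .
\end{equation*}
Next we rewrite the right-hand side via $\pushforward{G}{\Lebesgue_H}=\mu_G$ from \autoref{lem:pushforward of uniform with generalized inverse} (with the roles of $G,H$ swapped), applied to the function $x\mapsto h(G(x))\ind_A(x)$. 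Heuristically this gives
\begin{equation*}
  \int_A h\circ G\diff\mu_G=\int_{M_H} h(G(H(t)))\ind_A(H(t))\diff\Lebesgue(t),
\end{equation*}
and we want $G\circ H=\id$ $\Lebesgue$-\ac{ae} on $M_H\cap\preimage{H}{A}$. This holds because $G\circ H(t)\neq t$ only on the countably many flat pieces of $H$ (jumps of $G$), which have Lebesgue measure zero by the continuity of $G$ on those sets.

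The cleaner route uses absolute continuity of $H$ directly. Since $\Lebesgue_G\ll\mu_G$ and $H$ is absolutely continuous, $G\circ H=\id$ holds except on $\set{t\given H \text{ is constant near } t}$ and on the preimages of jump points of $G$. On the first set $h=0$ \ac{ae}; the second set is a countable union of singletons $\preimage{H}{\set{x}}$, which have measure $\mu_G(\set{x})$ in $\Lebesgue_H$-measure. Splitting $\preimage{H}{A}$ accordingly, we obtain $\int_A h\circ G\diff\mu_{G_{abs}}=\Lebesgue(A)$ for all Borel $A\subseteq M_G$. Here the atoms of $\mu_G$ drop out because $h=0$ on the intervals $\preimage{H}{\set{x}}$, and more generally the singular part $\mu_{G_\perp}$ contributes nothing since $h=0$ \ac{ae} on $\preimage{H}{A_{\mu_{G_\perp}}}$. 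Uniqueness of Radon–Nikodym derivatives then gives $(h\circ G)\,g_{abs}=1$ $\Lebesgue_G$-\ac{ae}, which is \eqref{eqn: density of absolutely continuous part of inverse measure of absolutely continuous measure}.

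The main obstacle is the last point: showing that the singular part of $\mu_G$ is invisible, i.e.\ $h=0$ $\Lebesgue$-\ac{ae} on $\preimage{H}{S}$ whenever $\Lebesgue(S)=0$, where $S=\setletter_{\mu_{G_\perp}}$. We would attack it with the Lebesgue-measure version of the change of variables for absolutely continuous monotone $H$: $\int_{\preimage{H}{S}}h\diff\Lebesgue=\Lebesgue(\Functionimage{H}{\preimage{H}{S}})\leq\Lebesgue(S)=0$. This follows from \autoref{cor: pushforward with continuous cdf} applied to $S$ itself.
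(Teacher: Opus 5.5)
Your argument is correct and is essentially the paper's proof. Both use the same change of variables through $H_\#\mu_H=\Lebesgue_G$ and $H_\#\Lebesgue_H=\mu_G$, with $h=0$ a.e.\ on the flat pieces $C_H$ of $H$ and $G\circ H=\id$ off them. The only difference is that you dispose of the singular part directly via $\mu_H(H^{-1}(S))=\Lebesgue_G(S)=0$ and read off $\RadonNikodym{\Lebesgue_G}{\mu_{G_{abs}}}=h\circ G$. The paper instead keeps $\mu_G$ as reference measure, carries the indicator $\ind_{H(I_H\setminus C_H)}$, and obtains $\RadonNikodym{\Lebesgue_G}{\mu_G}=1/g_{abs}$ separately from the chain rule.

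Do delete the ``heuristic'' paragraph, because its justification is false. A flat piece $H^{-1}(\{x\})$ has Lebesgue measure $\mu_G(\{x\})>0$, not zero, and on it $h\circ G\circ H$ is the constant $h(G(x))$ even though $h=0$ a.e.\ there; that is exactly why the splitting in your second route is needed. Two slips should also be fixed: the plan should identify $h\circ G$, not $1/(h\circ G)$, with $\RadonNikodym{\Lebesgue_G}{\mu_G}$, and the pushforward you use is $H_\#\Lebesgue_H=\mu_G$, not $G_\#\Lebesgue_H$.
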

\begin{proof}
    Again in the case where $\Functionimage{H}{I}$ is a singleton this holds trivially as $\restr{\Lebesgue}{M}$ is the zero measure.
    Thus, we assume that $\Functionimage{H}{I}$ is a proper interval and again \ac{wlog} that $I=I_H$ and $\restr{\Lebesgue}{M}=\Lebesgue_G$.
    Instead of proving \eqref{eqn: relation of Radon-Nikodym derivatives} we will prove the reciprocal equation in two steps
    \begin{equation*}
        \frac1{g_{abs}}\overset{1}{=}\RadonNikodym{{\Lebesgue}_G}{\mu_G}\overset{2}{=}h\circ G\quad\text{${\Lebesgue}_G$-\ac{ae}.}
    \end{equation*}
    \begin{itemize}[wide, labelwidth=!, labelindent=0pt,itemindent=!]
        \item[\enquote{$\overset{1}{=}$}:]
              As $0<g_{abs}=\RadonNikodym{\mu_{G_{abs}}}{{\Lebesgue}_{G}}$ $\Lebesgue_G$-\ac{ae} by \autoref{thm: absolute continuity of the generalized inverse}, we conclude from the proof of \autoref{Lem: Radon-Nikodym derivatives} that \begin{equation*}
                  \RadonNikodym{\Lebesgue_G}{\mu_{G_{abs}}}=\frac{1}{\RadonNikodym{\mu_{G_{abs}}}{\Lebesgue_G}}=\frac1{g_{abs}}\quad\text{$\Lebesgue_G$-\ac{ae}}
              \end{equation*}
              By definition of $\mu_{G_{abs}}=(\mu_G)_{abs}$ we have that for all $A\in\B(I_G)$
              \begin{equation*}
                  \mu_{G_{abs}}(A)=\mu_G(A\setminus\setletter_{{\perp}})=\int_A\ind_{I_G\setminus\setletter_{{\perp}}}\diff\mu_G
              \end{equation*}
              such that $\RadonNikodym{\mu_{G_{abs}}}{\mu_G}=\ind_{I_G\setminus\setletter_{\perp}}$ $\mu_G$-\ac{ae}.
              Now, by the chain rule for Radon-Nikodym derivatives \cite[3.9 Proposition.b.]{follandRealAnalysisModern1999} it holds that
              \begin{equation*}
                  \RadonNikodym{\Lebesgue_G}{\mu_{G}}=\RadonNikodym{\Lebesgue_G}{\mu_{G_{abs}}}\RadonNikodym{\mu_{G_{abs}}}{\mu_G}=\RadonNikodym{\Lebesgue_G}{\mu_{G_{abs}}}\cdot\ind_{I_G\setminus\setletter_{\perp}}\quad\text{$\mu_G$-\ac{ae}.}
              \end{equation*}
              As $\mu_{G_{abs}}(A_\perp)=0$ we have that $0=\Lebesgue_{G}(A_\perp)=\Lebesgue(A_\perp\cap M_G)$ and therefore holds $\ind_{I_G\setminus\setletter_{\perp}}=\ind_{M_G}=1$ $\Lebesgue_G$-\ac{ae}. Consequently, as $\Lebesgue_G\ll\mu_G$, we have that
              \begin{equation*}
                  \RadonNikodym{\Lebesgue_G}{\mu_{G}}=\RadonNikodym{\Lebesgue_G}{\mu_{G_{abs}}}
                  =\frac{1}{g_{abs}}\quad\text{$\Lebesgue_G$-\ac{ae}.}
              \end{equation*}
        \item[\enquote{$\overset{2}{=}$}:]To continue, we need the equality
              \begin{equation}
                  h=\ind_{\Functionimage{H}{I_H\setminus C_H}}\circ H\cdot(h\circ G\circ H)\quad\text{${\Lebesgue}_{H}$-\ac{ae}},
                  \label{eqn: necessary comperativitiy of h and G circ H}
              \end{equation}
              where $C_H\subseteq I_H$ denotes the set of points where $H$ is constant, \ac{ie}, not injective.
              Naturally, $H$ is injective on $I_H\setminus C_H$ and $G$ consequently its left inverse there, so that \eqref{eqn: necessary comperativitiy of h and G circ H} holds on $I_H\setminus C_H$ even everywhere.
              Now, as $H$ is constant on $C_H$, we have that $h=0$ $\Lebesgue$-\ac{ae} on $C_H$ and so is the right-hand side of \eqref{eqn: necessary comperativitiy of h and G circ H} as $\Functionimage{H}{I_H\setminus C_H}\cap\Functionimage{H}{C_H}=\emptyset$.
              Using this, we now show a second characterization of $\RadonNikodym{{\Lebesgue}_G}{\mu_G}$.
              To this end, take an arbitrary $A\in\B(I_G)$, then it holds
              \begin{align*}
                  {\Lebesgue}_G(A)
                   & =\pushforward{H}{\mu_H}(A)
                  =\int_{\preimage{H}{A}}\diff\mu_H
                  =\int_{\preimage{H}{A}}h\diff{\Lebesgue}_H                                                                             \\
                  \overset{\eqref{eqn: necessary comperativitiy of h and G circ H}}
                   & =\int_{\preimage{H}{A}}\big(\ind_{\Functionimage{H}{I_H\setminus C_H}}\cdot(h\circ G)\big)\circ H\diff{\Lebesgue}_H \\
                   & =\int_A  \ind_{\Functionimage{H}{I_H\setminus C_H}}\cdot (h\circ G)\diff\pushforward{H}{{\Lebesgue}_H}
                  =\int_A \ind_{\Functionimage{H}{I_H\setminus C_H}}\cdot (h\circ G)\diff\mu_G
              \end{align*}
              and thus $\RadonNikodym{{\Lebesgue}_G}{\mu_G}=\ind_{\Functionimage{H}{I_H\setminus C_H}}\cdot (h\circ G)$ $\mu_G$-\ac{ae}. As the constant regions of $H$ correspond to the jumps of $G$ \cite[(3.15),(3.16)]{delafortelleStudyGeneralizedInverses2015}, $\Functionimage{H}{C_H}$ is the at most countable set of discontinuities of $G$ and has Lebesgue measure $0$. Consequently, we have \begin{equation*}
                  \Lebesgue_G(I_G\setminus\Functionimage{H}{I_H\setminus C_H})=\Lebesgue_G(I_G\setminus M_G\cup \Functionimage{H}{C_H})=0
              \end{equation*} and arrive at
              \begin{equation*}
                  h\circ G=\RadonNikodym{{\Lebesgue}_G}{\mu_G}\quad\text{${\Lebesgue}_G$-\ac{ae}.}
              \end{equation*}
              Combining the results and inverting the equation yields the result.\qedhere
    \end{itemize}
\end{proof}
\begin{remark}
    In \cite[Proposition A.19]{bobkovOnedimensionalEmpiricalMeasures2019} the reverse equation is shown for a specific representative of $g_{abs}$, \ac{ie},
    \begin{equation*}
        h(t)=\frac{1}{g_{abs}(H(t))},\quad \forall t\in I_H\setminus C_H,
    \end{equation*}
    defines a Radon-Nikodym derivative of $H$. The proof is based on working with generalized derivatives as all absolutely continuous functions are $\Lebesgue$-\ac{ae} differentiable on their domain and the Radon-Nikodym derivatives agree again $\Lebesgue$-\ac{ae} with this derivative.
\end{remark}
\section{Unimodality of locally finite measures}
\label{sec: Unimodality of locally finite measures}
Before we prove \autoref{thm:unimodaldistributionhasabsolutelycontinuousqunatilefunction} and \autoref{thm: Unimodal distributions via quantile} we first extend the definitions of unimodality to locally finite measures on $\R$. Here, we restrict ourselves to non-decreasing real-valued functions on $\R$, so $G:\R\to\R$ with $I_G=\R$ which emulates the setting of \acp{CDF}{}.
\begin{definition}[Compare \autoref{def:density unimodal} and \autoref{def: Unimodality CDF}]
    \label{def: unimodality-generating}
    A non-decreasing function $G:\R\to\R$ is \emph{unimodality-generating} if there is a $\nu\in\Rbar$ such that $\restr{G}{(-\infty,\nu]}$ is convex and $\restr{G}{[\nu,+\infty)}$ is concave. We call any such $\nu$ a \emph{mode} of $G$.
    A locally finite Borel measure $\mu:\B(\R)\to\Rbar_{\geq0}$ is \emph{\ac{CDF}-unimodal} if any or equivalently all its distribution functions $G_\mu:\R\to\R$ are unimodality-generating and \emph{dens-unimodal} if $\mu$ is absolutely continuous with at least one quasi-concave density $g_\mu$. Any modes of $G_\mu$ respectively of $g_\mu$ are again called modes of $\mu$.
\end{definition}
\begin{remark}
    We choose the term unimodality-generating to emphasize the fact the associated measure $\mu_G$ it generates is unimodal. Still, every unimodality-generating function is indeed quasi-convex and quasi-concave as it is non-decreasing and therefore itself unimodal.
\end{remark}

\subsection{Relation c.d.f.-unimodality and dens-unimodality}
\label{sec: relation CDF unimodality and dens Unimodality}
Convexity and concavity of a general function $V:D\to\R$ defined on an interval $D$ satisfy the well known inequality $V(\convexindex x + (1-\convexindex) y)\leq$, respectively $\geq$, $\convexindex V(x)+(1-\convexindex)V(y)$ for all $\convexindex\in(0,1),x,y\in D$. On open intervals these functions are very regular.
\begin{theorem}[{\cite[12 Theorem A]{Roberts.1973}}]
    \label{thm: convexity characterisation}
    A function $V:I\to\R$ defined on an open interval $I\subseteq\R$ is convex (concave) if and only if $V$ is absolutely continuous and there is a Radon-Nikodym derivative $v$ of $V$ which is non-decreasing (non-increasing).
\end{theorem}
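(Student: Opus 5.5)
The plan is to prove the convex case and obtain the concave case by applying it to $-V$. Both directions rest on the \emph{three-slope inequality}: $V$ is convex on $I$ if and only if for all $x<z<y$ in $I$
\begin{equation*}
    \frac{V(z)-V(x)}{z-x}\leq\frac{V(y)-V(x)}{y-x}\leq\frac{V(y)-V(z)}{y-z}.
\end{equation*}
This follows directly by writing $z=\convexindex x+(1-\convexindex)y$ and rearranging the convexity inequality. Since $V$ need not be monotone, I will use absolute continuity in the analysis sense \cite[7.17]{Rudin.1987b}, together with the fundamental theorem of calculus for such functions: $V$ is absolutely continuous on every compact subinterval if and only if $V$ is $\Lebesgue$-\ac{ae} differentiable with locally integrable derivative $V'$ and $V(y)-V(x)=\int_x^yV'\diff\Lebesgue$.

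For \enquote{$\Rightarrow$}, fix $[a,b]\subset I$. Since $I$ is open, I choose $a'<a$ and $b'>b$ in $I$. By the three-slope inequality, every difference quotient $\frac{V(t)-V(s)}{t-s}$ with $s<t$ in $[a,b]$ lies between $\frac{V(a)-V(a')}{a-a'}$ and $\frac{V(b')-V(b)}{b'-b}$. Hence $V$ is Lipschitz on $[a,b]$, and therefore absolutely continuous there. Next, the three-slope inequality shows that the right derivative $V'_+(x)=\lim_{t\searrow x}\frac{V(t)-V(x)}{t-x}$ exists at every $x\in I$, because it is a monotone limit bounded from below, and that $V'_+$ is non-decreasing on $I$. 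At every point of differentiability $V'=V'_+$, so $V'_+=V'$ $\Lebesgue$-\ac{ae}. The fundamental theorem then gives $V(y)-V(x)=\int_x^yV'_+\diff\Lebesgue$, so $v\defeq V'_+$ is a non-decreasing Radon-Nikodym derivative.

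For \enquote{$\Leftarrow$}, let $V(y)-V(x)=\int_x^yv\diff\Lebesgue$ with $v$ non-decreasing, and take $x<z<y$ in $I$. Monotonicity of $v$ gives $v\leq v(z-)$ on $(x,z)$ and $v\geq v(z+)\geq v(z-)$ on $(z,y)$. Averaging the integral over each interval yields
\begin{equation*}
    \frac{V(z)-V(x)}{z-x}\leq v(z-)\leq\frac{V(y)-V(z)}{y-z}.
\end{equation*}
This comparison of the outer slopes is equivalent to the three-slope inequality, since the middle chord slope is a convex combination of the two outer slopes. Hence $V$ is convex.

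The only step needing care is the forward direction's identification of the a.e.\ derivative with a genuinely non-decreasing, everywhere-defined representative. The integral representation itself comes from the fundamental theorem once local Lipschitz continuity is established. I would handle the representative by using the one-sided derivative $V'_+$ rather than $V'$, since $V'$ exists only a.e. The remaining arguments are routine slope manipulations.
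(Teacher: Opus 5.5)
Your proof is correct. The paper gives no argument of its own for this statement and only quotes it from Roberts--Varberg. Your route is essentially the standard textbook proof behind that citation: for \enquote{$\Rightarrow$}, Lipschitz bounds on compact subintervals, the everywhere-defined non-decreasing right derivative $V'_+$, and the fundamental theorem of calculus; for \enquote{$\Leftarrow$}, bounding the averages of the monotone $v$ on either side of $z$ by $v(z-)$.

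You also correctly read \enquote{absolutely continuous} and \enquote{Radon--Nikodym derivative} for a possibly non-monotone $V$ in the local sense, as an integral representation on compact subintervals. This matches how the paper uses these notions, and it is necessary: for example, $x\mapsto 1/x$ is convex on $(0,1)$ but not absolutely continuous on all of $(0,1)$.
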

Further, if $V:(a,b)\to\R$, $a<b\in\R$ is convex then $V:[a,b]\to\Rbar$ is convex if and only if $V(a)\geq V(a+),V(b)\geq V(b-)$, respectively with $\leq$ for concavity.
Equipped like this we now give the relation between \ac{CDF}-unimodal and dens-unimodal measures.
\begin{lemma}
    \label{lem: Relation dens unimodal and cdf unimodal}
    A locally finite measure $\mu:\B(\R)\to\Rbar_{\geq0}$ is \ac{CDF}-unimodal if and only if $\mu_{abs}$ is dens-unimodal and $\mu_\perp=c\delta_{\nu},c\in\R_{\geq0}$ is a rescaled Dirac measure at a mode $\nu$ of $\mu_{abs}$ (or the $0$ measure for $c=0$), where $\mu=\mu_{abs}+\mu_\perp$ is the Lebesgue decomposition of $\mu$ \ac{wrt} to the Lebesgue measure $\Lebesgue:\B(\R)\to\Rbar_{\geq0}$.
\end{lemma}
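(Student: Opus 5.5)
We prove both directions by working with a distribution function $G=G_\mu$ and the restrictions of $G$ to the half-lines left and right of the candidate mode $\nu$. On each open half-line, \autoref{thm: convexity characterisation} turns convexity or concavity into absolute continuity with a monotone Radon-Nikodym derivative. The behaviour of $G$ at $\nu$ itself is governed by the endpoint criterion stated after that theorem. Throughout, the Lebesgue decomposition of $\mu$ with respect to $\Lebesgue$ is matched with the decomposition $G_r=G_{abs}+G_\perp+G_r(z)$.

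\emph{Necessity.} Let $G$ be unimodality-generating with mode $\nu$; the cases $\nu=\pm\infty$ are analogous and simpler.
\begin{enumerate}
\item By \autoref{thm: convexity characterisation}, $G$ is absolutely continuous on $(-\infty,\nu)$ with a non-decreasing derivative $g_1$, and on $(\nu,+\infty)$ with a non-increasing derivative $g_2$.
\item Hence $\mu$ restricted to $\R\setminus\set{\nu}$ is absolutely continuous, so the singular part is concentrated on $\set{\nu}$. This gives $\mu_\perp=c\delta_\nu$ with $c=G(\nu+)-G(\nu-)\geq0$.
\item Set $g\defeq g_1$ on $(-\infty,\nu)$ and $g\defeq g_2$ on $[\nu,\infty)$. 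Then $g$ is a density of $\mu_{abs}$, non-decreasing on $A=(-\infty,\nu)$ and non-increasing on $B=[\nu,\infty)$. By the Martos decomposition characterisation it is quasi-concave with mode $\nu$, so $\mu_{abs}$ is dens-unimodal with mode $\nu$.
\end{enumerate}
Here we need that the constructed $g$ is genuinely quasi-concave, not just a.e.\ monotone on each side, and no obstruction arises. The only choice is which side $g(\nu)$ belongs to, and either choice works.

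\emph{Sufficiency.} Let $\mu_{abs}$ have a quasi-concave density $g$ with mode $\nu$ and decomposition $A\sqcup B$, and let $\mu_\perp=c\delta_\nu$.
\begin{enumerate}
\item On $(-\infty,\nu)$ the distribution function $G$ equals $G_{abs}$ up to a constant, and $g$ is non-decreasing there. By \autoref{thm: convexity characterisation}, $G$ is convex on the open half-line $(-\infty,\nu)$. Symmetrically, $G$ is concave on $(\nu,\infty)$.
\item To extend convexity to $(-\infty,\nu]$, apply the endpoint criterion: it suffices that $G(\nu)\geq G(\nu-)$, which holds since $G$ is non-decreasing. For concavity on $[\nu,\infty)$ we need $G(\nu)\leq G(\nu+)$, which also holds by monotonicity.
\end{enumerate}

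\emph{Main obstacle.} The delicate points are at $\nu$ and concern versions. First, the endpoint criterion must hold for every version of $G$, since the statement says "any or equivalently all" distribution functions. This is fine because any version satisfies $G_l\leq G\leq G_r$. Second, if $g$ is unbounded near $\nu$, or $\nu=\pm\infty$, local finiteness must still make $G$ real-valued. This holds because $\mu$ is assumed locally finite.

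A further subtlety: a Dirac mass at a point $\nu'\neq\nu$ would break convexity on $(-\infty,\nu)$, because convex functions on open intervals are continuous. This is exactly why the atom must sit at a mode, and it is used implicitly in the necessity step 2.

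Finally, when the modal interval $[\nu_{\min},\nu_{\max}]$ is nondegenerate, "a mode" must be matched consistently in both directions. Given an atom at $\nu$, one chooses the decomposition $A\sqcup B$ so that $\sup A=\nu$. This is possible whenever $\nu$ lies in the modal interval, since $g$ is constant there a.e.
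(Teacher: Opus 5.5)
Your proof is correct and follows the paper's argument essentially step for step: \autoref{thm: convexity characterisation} on the two open half-lines, the singular part forced onto $\set{\nu}$ by absolute continuity away from $\nu$, and monotonicity of $G$ (via $G_l\leq G\leq G_r$) to pass from the open to the closed half-lines. The one point to tighten is the value $g(\nu)$ in the necessity part: the theorem only gives $g_2$ on $(\nu,+\infty)$, so for $g$ to be non-increasing on $B=[\nu,+\infty)$ you must choose $g(\nu)\geq\sup_{x>\nu}g_2(x)$ (possibly $+\infty$). This is not just a choice of side, since e.g.\ $g(\nu)=0$ can destroy quasi-concavity; the paper handles it by setting $g_{abs}(\nu)\defeq\sup_{x\neq\nu}g_{abs}(x)$.
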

\begin{proof} Let $G:\R\to\R$ be a right-continuous distribution function of $\mu$.
    \begin{itemize}[wide = \parindent]
        \item[\enquote{$\Rightarrow$}:]
              Assume that $G$ is unimodality-generating.
              Consequently, by \autoref{thm: convexity characterisation} $G$ is absolutely continuous on $\R\setminus\set{\nu}$ for some mode $\nu$ of $G$.
              If $\nu\not\in\R$, then $G$ is absolutely continuous and again by \autoref{thm: convexity characterisation} there is a Radon-Nikodym derivative $g$ which is non-decreasing or non-increasing and therefore quasi-concave and $\mu_\perp=0$ (\autoref{Lem: Radon-Nikodym derivatives}).
              Thus, we assume $\nu\in\R$ and take the Lebesgue decomposition of $G$ based on $z=\nu$ given as $G=G_{abs}+G_\perp+G(\nu)$. Because $G$ is unimodality-generating, we have $G=G_{abs}$ on $(-\infty,\nu)$, $G=G_{abs}+c$ on $(\nu,+\infty)$ and $\mu_{G_\perp}=c\delta_{\nu}$ for $c\defeq\mu_G(\set{\nu})$. So, $G_{abs}$ is convex on $(-\infty,\nu)$, respectively concave on $(\nu,+\infty)$ and there exists a Radon-Nikodym derivative $g_{abs}$ which is non-decreasing on $(-\infty,\nu)$ and non-increasing on $(\nu,+\infty)$ by \autoref{thm: convexity characterisation}. Finally, setting $g_{abs}(\nu)\defeq\sup_{x\in \R\setminus\set{\nu}}g_{abs}(x)$ we have that $g_{abs}$ is quasi-concave.
        \item[\enquote{$\Leftarrow$}:]
              For the reverse direction we again have that $G=G_{abs}$ on $(-\infty,\nu)$ and $G=G_{abs}+c$ on $(\nu,+\infty)$ where $c\defeq\mu_\perp(\set{\nu})$. So, $c=0$ if $\nu\not\in\R$.
              By the dens-unimodality of $\mu_{abs}$ there is a quasi-concave Radon-Nikodym derivative $g_{abs}$. From \autoref{thm: convexity characterisation} we conclude that $G_{abs}$ and consequently $G$ is convex on $(-\infty,\nu)$ and concave on $(\nu,+\infty)$.
              As $G$ is non-decreasing, the convexity extends to $(-\infty,\nu]$ and the concavity to $[\nu,+\infty)$, \ac{ie}, $G$ is unimodality-generating.\qedhere
    \end{itemize}
\end{proof}
\subsection{Absolute continuity of the q.f. of a c.d.f.-unimodal distribution}
\label{sec: Proof of absolute continuity o fgeneralized inverse of unimodal genearting}
We now return to probability distributions  $\mu:\B(\R)\to[0,1]$.
Recall, that the \ac{CDF} $F_\mu:\R\to\R$ of $\mu$ is the right-continuous version of the specific distribution function of $\mu$ which vanishes at $-\infty$, \ac{ie}, we have that $F_\mu(-\infty)=0$, and that $Q_\mu:(0,1)\to\R$ is the real restriction of the left-continuous version of the generalized inverse of $F_\mu$. Further, as we still have $I_{F_\mu}=\R$, we have $I_{Q_\mu}=M_{Q_\mu}=(0,1)$ and $\closure{M_{F_\mu}}=\closure{\preimage{F_\mu}{M_{Q_\mu}}}=S_{F_\mu}$ so $\Lebesgue_{F_\mu}$ is the Lebesgue measure restricted to the supporting interval $S_{F_\mu}$ on $\B(\R)$.
\begin{proof}[Proof of \autoref{thm:unimodaldistributionhasabsolutelycontinuousqunatilefunction}]
    If $\mu_{abs}=0$ then $Q_\mu\equiv\nu$, so $Q_\mu$ is constant and absolutely continuous. Therefore, assume now that $\mu_{abs}\neq0$.
    By \autoref{lem: Relation dens unimodal and cdf unimodal} the absolutely continuous part $\mu_{abs}$ has a quasi-concave density $f_{abs}$ and hence $\support{\mu_{abs}}=\closure{\preimage{f_{abs}}{\R_{>0}}}\neq\emptyset$ is convex. Further, $\support{\mu_\perp}=\set{\nu}$ with $\nu$ a mode of $f_{abs}$ and consequently $\support{\mu_\perp}\subseteq\support{\mu_{abs}}=\support{\mu}$. Therefore,
    \begin{equation*}
        \closure{\preimage{f_{abs}}{\R_{>0}}}=\support{\mu}=S_{F_\mu}.
    \end{equation*}
    This gives us $f_{abs}>0$ $\Lebesgue_{F_\mu}$-\ac{ae} from which we conclude by \autoref{thm: absolute continuity of the generalized inverse} that $Q_\mu$ is absolutely continuous.
\end{proof}
\subsection{Characterizations of c.d.f.-unimodality via the q.f}
\label{sec: Proof of characterization of Unimodality via quantile functions}
\begin{proof}[Proof of \autoref{thm: Unimodal distributions via quantile}]\leavevmode
    \begin{enumerate}[wide, labelwidth=!, labelindent=0pt,itemindent=!]
        \item In both cases we know that $Q_\mu:(0,1)\to\R$ is absolutely continuous once by assumption and once by \autoref{thm:unimodaldistributionhasabsolutelycontinuousqunatilefunction}.
              \begin{itemize}[wide = \parindent]
                  \item[\enquote{$\Leftarrow$}:]
                        Assume first that $q_\mu$ is quasi-convex and define based on \autoref{thm: Radon-Nikodym derivative generlized inverse function} a Radon-Nikodym derivative $f_{abs}$ of the absolutely continuous part $F_{\mu_{abs}}$ of $F_\mu$ as $f_{abs}\defeq\frac{1}{q_\mu\circ F_\mu}$ on $M_{F_\mu}$. As $\closure{M_{F_\mu}}=S_{F_\mu}=S_{F_{\mu_{abs}}}$, we can extend $f_{abs}$ with $0$ on $\R\setminus M_{F_\mu}$ to obtain a Radon-Nikodym derivative on $\R$. As this $f_{abs}$ vanishes outside $M_{F_\mu}$, we only need to check \eqref{def: quasi-concaity} on $M_{F_\mu}$ to prove the quasi-concavity of $f_{abs}$.
                        To this end, take $x<y\in M_{F_\mu}$, so $F_{\mu}(x)<F_{\mu}(y)$, and $\convexindex\in(0,1)$ and set $\convexindex'=\frac{F_\mu(y)-F_\mu(\convexindex x+ (1-\convexindex)y)}{F_\mu(y)-F_\mu(x)}\in(0,1)$. Then, it holds
                        \begin{align*}
                             &                 & q_\mu (\convexindex' F_\mu (x)+ (1-\convexindex')F_{\mu}(y))  & \leq \max({q_{\mu}(F_\mu(x))},{q_{\mu}(F_{\mu}(y))})        \\
                             & \Rightarrow     & q_\mu\circ F_\mu(\convexindex x+ (1-\convexindex)y)           & \leq \max(q_\mu\circ F_\mu(x),q_\mu\circ F_\mu(y))
                            \\
                             & \LeftRightarrow & \frac1{q_\mu\circ F_\mu}(\convexindex x + (1-\convexindex) y) & \geq\frac1{\max({q_\mu\circ F_\mu(x),q_\mu\circ F_\mu(y)})} \\
                             & \LeftRightarrow & f_{abs}(\convexindex x + (1-\convexindex) y)                  & \geq\min({f_{abs}(x),f_{abs}(y)}).
                        \end{align*}
                        So, $f_{abs}$ is quasi-concave on $M_{F_{\mu}}$, thus on $\R$ and $\mu_{abs}$ is dens-unimodal.
                        Now, if $F_\mu$ is not absolutely continuous on $\R$, then by \autoref{thm: absolute continuity of the generalized inverse} there is a proper interval on which $q_\mu=q_{abs}=0$ $\Lebesgue$-\ac{ae}.
                        By the quasi-convexity of $q_\mu$ we have that $\preimage{q_\mu}{\set{0}}$ is an interval and its closure $\closure{\preimage{q_\mu}{\set{0}}}\eqdef[\alpha_{\min},\alpha_{\max}]$ gives the quantile modal interval.
                        In particular, $Q_\mu$ is only constant on this one interval and the domain of $F_\mu$ decomposes into an absolutely continuous part followed by a single jump at $\nu\in\set{\nu}= \Functionimage{Q_\mu}{[\alpha_{\min},\alpha_{\max}]}$ and again an absolutely continuous part by \autoref{thm: absolute continuity of the generalized inverse}. If we now set $q_{\mu}(\alpha_{\max})=0$, which does not change the quasi-concavity, then $f_{abs}(\nu)=+\infty$ and $\nu$ is the mode of $f_{abs}$ and coincides with the jump at $\nu$. Therefore, $\mu$ is \ac{CDF}-unimodal by \autoref{lem: Relation dens unimodal and cdf unimodal}.
                  \item[\enquote{$\Rightarrow$}:]
                        Assume now that $\mu$ is \ac{CDF}-unimodal.
                        In the case where $F_\mu$ is absolutely continuous the same calculation as above and setting $q_\mu=q_{abs}\defeq\frac{1}{f_\mu\circ Q_\mu}$ (\autoref{thm: Radon-Nikodym derivative generlized inverse function}) would yield that $q_\mu$ is quasi-convex.
                        Consequently, we assume now that $\mu_\perp\neq0$ is a Dirac measure at $\nu$ a mode of the quasi-concave $f_{abs}$ by \autoref{lem: Relation dens unimodal and cdf unimodal}.
                        By the absolute continuity of $F_\mu$ on $(-\infty,\nu)$, respectively $(\nu,+\infty)$, we can define a valid quantile density  on $(0,1)\setminus[\alpha_{\min},\alpha_{\max}]=\preimage{Q_\mu}{\R\setminus\set{\nu}}$ by $q_\mu\defeq\frac1{f_{abs}\circ Q_\mu}$. Inheriting from $f_{abs}$, this $q_\mu$ is non-increasing on $(0,\alpha_{\min})$ and non-decreasing on $(\alpha_{\max},1)$. On $[\alpha_{\min},\alpha_{\max}]$ we have that $Q_\mu$ is constantly $\nu$, so we can set $q_\mu=0$ on $[\alpha_{\min},\alpha_{\max}]$ to obtain a quasi-convex function which is a Radon-Nikodym derivative of $Q_\mu$.
              \end{itemize}
        \item[\enquote{\ref{enum: quantile density definition of unimodality}$\LeftRightarrow$\ref{enum: Qunatife function definition of unimodality}}:] By \autoref{thm: convexity characterisation} the conditions on $Q_\mu$ directly give that it is absolutely continuous everywhere except at $\alpha$. By the convexity of $Q_\mu$ on $[\alpha,1)$, a discontinuous upward jump would be possible there, but $Q_\mu$ is non-decreasing. Thus, $Q_\mu$ is continuous at $\alpha$ and therefore everywhere absolutely continuous. So, in both cases $Q_\mu$ is everywhere absolutely continuous. Consequently, \autoref{thm: convexity characterisation} gives the equivalence between $Q_\mu$ being first concave, then convex and $q_\mu$ being quasi-convex, by potentially again setting $q_\mu(\alpha)=0$.\qedhere
    \end{enumerate}
\end{proof}
\subsection{Generalized inverses of unimodality-generating non-decreasing functions}
\autoref{thm:unimodaldistributionhasabsolutelycontinuousqunatilefunction} and \autoref{thm: Unimodal distributions via quantile} generalize directly to locally finite \ac{CDF}-unimodal measures.
\begin{theorem}
    \label{thm: absolute continuity of generalized inverse of unimodality-generating non-decreasing function}
    The generalized inverse $\Tilde{H}:I_H\to\R$ of a unimodality-generating non-decreasing function $G:\R\to\R$ is absolutely continuous.
\end{theorem}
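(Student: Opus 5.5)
The plan is to copy the proof of \autoref{thm:unimodaldistributionhasabsolutelycontinuousqunatilefunction}, replacing the \ac{CDF} by the general distribution function $G$ and the probability measure by the locally finite measure $\mu_G$. The tool is \autoref{thm: absolute continuity of the generalized inverse} with $I=I_H$ and $M=M_G$. It reduces the claim to showing that a Radon-Nikodym derivative $g_{abs}$ of $G_{abs}$ satisfies $g_{abs}>0$ $\Lebesgue_G$-\ac{ae}, i.e.\ \ac{ae} on the mass interval $M_G=\interior{\preimage{G}{I_H}}$.

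First, dispose of the degenerate case. If $(\mu_G)_{abs}=0$, then by \autoref{lem: Relation dens unimodal and cdf unimodal} we have $\mu_G=c\delta_\nu$ with $c>0$, because we excluded everywhere constant $G$. So $G$ is a step function with a single jump and $\Tilde{H}$ is constant equal to $\nu$ on $I_H$, hence absolutely continuous. (Equivalently, $M_G$ is empty or a point, and the trivial case of \autoref{thm: absolute continuity of the generalized inverse} applies.)

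In the nondegenerate case, \autoref{lem: Relation dens unimodal and cdf unimodal} gives a quasi-concave density $g_{abs}$ of $(\mu_G)_{abs}$ and $(\mu_G)_\perp=c\delta_\nu$ with $\nu$ a mode of $g_{abs}$. Quasi-concavity makes the superlevel set $\preimage{g_{abs}}{\R_{>0}}$ an interval $J$, so $\support{(\mu_G)_{abs}}=\closure{J}$ is a nondegenerate closed interval. The point $\nu$ lies in $\closure{J}$: a mode of a quasi-concave function that is positive somewhere cannot lie strictly outside its positivity interval, else $g_{abs}$ would vanish on one whole side of $\nu$ including $J$, contradicting the monotonicity split. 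Hence $\support{\mu_G}=\closure{J}$ is convex and equals the supporting interval $S_G$, as observed at the end of \autoref{sec: Non-Decreasing Functions}.

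It remains to relate $S_G$ to $M_G$. Since $I_G=\R$ here, $S_G=\closure{\preimage{G}{M_H}}$ and $M_G\subseteq S_G$ up to endpoints; on $\interior{S_G}=J$ we have $g_{abs}>0$. Hence $g_{abs}>0$ $\Lebesgue$-\ac{ae} on $M_G$, and \autoref{thm: absolute continuity of the generalized inverse} yields the absolute continuity of $\Tilde{H}$.

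The main obstacle is this identification of $M_G$ with (the interior of) $S_G$ for locally finite rather than probability measures, since $\mu_G(\R)$ may be infinite and $I_H$ may be unbounded. I would check it directly from the definitions $M_G=\big(\inf_{t\in I_H}H(t),\sup_{t\in I_H}H(t)\big)$ and $S_G=\big[\inf_{M_H}H,\sup_{M_H}H\big]$ with $M_H=I_H$ (because $I_G=\R$), which gives $M_G=\interior{S_G}$.
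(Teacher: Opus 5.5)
Your proposal is correct and follows the paper's own route: the paper proves this theorem by rerunning the proof of \autoref{thm:unimodaldistributionhasabsolutelycontinuousqunatilefunction} with $F_\mu, Q_\mu$ replaced by $G, H$, which means handling the Dirac-only case, then using the quasi-concave density to get $\operatorname{spt}(\mu_G)=S_G$, then applying \autoref{thm: absolute continuity of the generalized inverse}; your check that $I_G=\R$ forces $M_H=I_H$, and hence that $M_G$ is the interior of $S_G$, is exactly the observation the paper records in the c.d.f. case. One cosmetic slip: $J$ may contain its endpoints, so you only get that the interior of $S_G$ is contained in $J$ rather than equal to it, and that inclusion is all you need.
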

\begin{theorem}
    \label{thm: equivalent definition of unimodality-generating via generalized inverse}
    A non-decreasing function $G:\R\to\R$ is unimodality-generating if and only if
    \begin{enumerate}[left=\parindent]
        \item there exists a Radon-Nikodym derivative $h$ of $\Tilde{H}$ which is quasi-convex, or
        \item there exists a mode $\alpha\in[a,b]=\closure{I_H}\subseteq\Rbar$ such that $H$ is concave on $(a,\alpha]$ and convex on $[\alpha,b)$.
    \end{enumerate}
\end{theorem}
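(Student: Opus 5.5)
The plan is to repeat the proof of \autoref{thm: Unimodal distributions via quantile}, replacing $F_\mu,Q_\mu,(0,1)$ by $G,\Tilde H,I_H=(a,b)$ and appealing to \autoref{thm: absolute continuity of generalized inverse of unimodality-generating non-decreasing function} instead of \autoref{thm:unimodaldistributionhasabsolutelycontinuousqunatilefunction}. The proof of \autoref{thm: Unimodal distributions via quantile} used only facts valid for general generalized inverse pairs, so it should transfer: \autoref{lem: Relation dens unimodal and cdf unimodal} (stated for locally finite measures), \autoref{thm: absolute continuity of the generalized inverse}, the inverse function rule \autoref{thm: Radon-Nikodym derivative generlized inverse function}, and \autoref{thm: convexity characterisation}. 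Constant shifts do not change $\mu_G$, so I fix a right-continuous version $G_r$.

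\textbf{Step 1 (condition a).} For ``$\Rightarrow$'', \autoref{thm: absolute continuity of generalized inverse of unimodality-generating non-decreasing function} gives that $\Tilde H$ is absolutely continuous, and \autoref{lem: Relation dens unimodal and cdf unimodal} gives $\mu_G=\mu_{abs}+c\delta_\nu$ with $g_{abs}$ quasi-concave with mode $\nu$.

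1. On $H^{-1}(\R\setminus\{\nu\})=I_H\setminus[\alpha_{\min},\alpha_{\max}]$ I set $h\defeq 1/(g_{abs}\circ H)$. By \autoref{thm: Radon-Nikodym derivative generlized inverse function}, with roles swapped, this is a Radon-Nikodym derivative there.
2. Because $H$ is non-decreasing, $h$ inherits the monotonicity of $g_{abs}$: non-increasing left of $\alpha_{\min}$ and non-decreasing right of $\alpha_{\max}$.
3. On the flat piece $[\alpha_{\min},\alpha_{\max}]$ I set $h=0$. The result is quasi-convex.

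For ``$\Leftarrow$'', I set $g_{abs}\defeq 1/(h\circ G)$ on $M_G$ and extend it by $0$ outside $M_G$.

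1. Quasi-concavity follows from the same reparametrization $w'$ as in the earlier proof. It uses that $G$ is strictly increasing on $M_G$ by \autoref{lem: continuity of generalized inverse}, which applies since $\Tilde H$ is continuous.
2. The zero set of the quasi-convex $h$ is an interval. So $\Tilde H$ has at most one flat piece, and $G$ has at most one jump, at the value $\nu$ of $H$ there.
3. I check that $\nu$ is a mode of $g_{abs}$. Then \autoref{lem: Relation dens unimodal and cdf unimodal} gives that $G$ is unimodality-generating.

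\textbf{Step 2 (a $\Leftrightarrow$ b).} By \autoref{thm: convexity characterisation}, $H$ is absolutely continuous on $(a,\alpha)$ and on $(\alpha,b)$. Continuity at $\alpha$ follows as before: an endpoint of a convex piece can only jump upward, and on the concave side only downward, which monotonicity forbids. This argument works for $\alpha\in(a,b)$; if $\alpha\in\{a,b\}$ there is only one piece and nothing to check. Then \autoref{thm: convexity characterisation} turns ``concave then convex'' into ``$h$ non-increasing then non-decreasing'', i.e.\ quasi-convex, after possibly redefining $h(\alpha)$.

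\textbf{Main obstacle.} The new difficulty compared to the probability case is that $I_H=(a,b)$ may be unbounded or a proper subinterval, and $M_G$, $S_G$ need not match the earlier picture; for instance $a=-\infty$ when $\mu_G(-\infty,0]=\infty$. I have to verify that the identities used before still hold: $I_G=\R$ forces $M_H=I_H$ and $\overline{M_G}=S_G=\support{\mu_G}$. Here $\support{\mu_G}$ is convex because $g_{abs}$ is quasi-concave and the atom sits at a mode. I also have to handle the degenerate cases carefully: a mode $\nu=\pm\infty$ corresponds to $\alpha\in\{a,b\}$, and $\mu_{abs}=0$ makes $H$ constant. I expect checking that the quantile mode lies in $\closure{I_H}$ in these edge cases to need the most care.
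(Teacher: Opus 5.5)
Your plan is the paper's own proof, which just reruns the proofs of \autoref{thm:unimodaldistributionhasabsolutelycontinuousqunatilefunction} and \autoref{thm: Unimodal distributions via quantile} with $F_\mu,Q_\mu$ replaced by $G,\tilde H$. Your ``$\Rightarrow$'' part of (a) and your Step 2 reproduce those arguments correctly; Step 2 even checks the endpoint of the concave piece, which the paper leaves implicit. There is, however, one inference in your ``$\Leftarrow$'' part of (a) that does not go through as written. \autoref{lem: Relation dens unimodal and cdf unimodal} requires the \emph{entire} singular part of $\mu_G$ to be $c\delta_\nu$. From the zero set of $h$ being an interval you only deduce that $\tilde H$ has at most one flat piece, i.e.\ that $G$ has at most one jump. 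That controls only the atomic part of $\mu_G$. A non-decreasing $G$ without jumps can still carry a singular continuous (Cantor-type) component, and nothing in your items 1--3 rules this out, so the lemma cannot yet be applied.

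The missing step is that $G$ is absolutely continuous on $(-\infty,\nu)$ and on $(\nu,+\infty)$. This is exactly where the paper invokes \autoref{thm: absolute continuity of the generalized inverse}, which you list among your tools but never use here. With the roles of $G$ and $H$ exchanged, it says that $G$ is absolutely continuous on an open interval $J$ if and only if $h>0$ a.e.\ on $\bigl(H^{-1}(J)\bigr)^\circ$; here $h_{abs}=h$ because $\tilde H$ is absolutely continuous. If the zero set of your quasi-convex $h$ has positive length, it is an interval on which $H\equiv\nu$, and its closure is $[\alpha_{\min},\alpha_{\max}]=\overline{H^{-1}(\{\nu\})}$. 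Hence $h>0$ on $(a,\alpha_{\min})=\bigl(H^{-1}((-\infty,\nu))\bigr)^\circ$ and on $(\alpha_{\max},b)$. So $G$ is absolutely continuous on both sides of $\nu$, and $\mu_G=\mu_{abs}+c\delta_\nu$ with $c=\mu_G(\{\nu\})$. If instead the zero set is Lebesgue-null, the same theorem makes $G$ absolutely continuous on all of $\R$. Once this is in place, your remaining check is immediate. $h$ is non-increasing to the left and non-decreasing to the right of its zero interval, so $1/(h\circ G)$ is non-decreasing on $(-\infty,\nu)$ and non-increasing on $(\nu,+\infty)$. Setting $g_{abs}(\nu)=+\infty$ (equivalently $h(\alpha_{\max})=0$, as in the paper) then makes $\nu$ a mode.
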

Both theorems can be directly proven by replacing $F_\mu$ with $G$ and $Q_\mu$ with $H$ in the proofs of \autoref{thm:unimodaldistributionhasabsolutelycontinuousqunatilefunction} and \autoref{thm: Unimodal distributions via quantile} respectively. As all tools used for these proofs are formulated for general distribution functions, no complication arises. The occurrence of a mode $\nu\in\set{-\infty,\infty}$ possible in this setting even simplifies the proofs because in that case $G$ is absolutely continuous.

\bibliographystyle{amsplain}

\providecommand{\bysame}{\leavevmode\hbox to3em{\hrulefill}\thinspace}
\providecommand{\MR}{\relax\ifhmode\unskip\space\fi MR }
\providecommand{\MRhref}[2]{%
    \href{http://www.ams.org/mathscinet-getitem?mr=#1}{#2}
}
\providecommand{\href}[2]{#2}

\end{document}